\documentclass[letterpaper,10pt,twocolumn,twoside,journal]{IEEEtran} 

\usepackage{times}
\usepackage{cite}
\usepackage{amssymb}
\usepackage{mathtools}
\usepackage{graphicx}
\usepackage{graphicx,xcolor}
\usepackage{algorithm}
\usepackage{hyperref}
\usepackage[noend]{algpseudocode}
\usepackage{subfigure}

\usepackage[font =small]{caption}

\usepackage{amsthm}

\usepackage{tikz,calc,bbding}
\usetikzlibrary{shapes, arrows, decorations, decorations.pathreplacing, decorations.pathmorphing, decorations.markings, fit, matrix}
\usepackage{arydshln}

%\graphicspath{{figures/}}

\newtheorem{theorem}{Theorem}[section]
\newtheorem{proposition}[theorem]{Proposition}
\newtheorem{lemma}[theorem]{Lemma}

\newtheorem{remark}[theorem]{Remark}

\newtheorem{assumption}[theorem]{Assumption}

\newcommand{\longthmtitle}[1]{\mbox{}{\textit{(#1):}}}

\newcommand{\real}{\ensuremath{\mathbb{R}}}

\newcommand{\integers}{{\mathbb{Z}}}
\newcommand{\intpos}{{\mathbb{N}}}

\newcommand{\intnonneg}{{\mathbb{Z}}_{\ge 0}}

\newcommand{\setdef}[2]{\{#1 \; | \; #2\}}

\newcommand{\Ac}{\mathcal{A}}

\newcommand{\Ec}{\mathcal{E}}

\newcommand{\Gc}{\mathcal{G}}
\newcommand{\Hc}{\mathcal{H}}

\newcommand{\Kc}{\mathcal{K}}
\newcommand{\Lc}{\mathcal{L}}

\newcommand{\Nc}{\mathcal{N}}

\newcommand{\Qc}{\mathcal{Q}}

\newcommand{\Sc}{\mathcal{S}}

\newcommand{\Vc}{\mathcal{V}}

\newcommand\col{\textnormal{col}}

\newcommand{\diag}{{\rm diag}}

\newcommand{\trans}{\mathsf{T}}

\newcommand\blkdiag{\text{blkdiag}}
\newcommand{\norm}[1]{\left\lVert#1\right\rVert}

\usepackage{enumitem}

\newcommand{\rowrank}{\textnormal{rowrank }}

\newcommand{\oprocendsymbol}{\hbox{$\square$}}
\newcommand{\oprocend}{\relax\ifmmode\else\unskip\hfill\fi\oprocendsymbol}

\allowdisplaybreaks

\renewcommand{\algorithmicrequire}{\textbf{Input:}}

\title{\LARGE \bf Data-Based Receding Horizon Control of
  Linear Network Systems\thanks{This work was supported by
    ARO-W911NF-18-1-0213.}}  \author{Ahmed Allibhoy \quad Jorge
  Cort\'es \thanks{A.  Allibhoy and J. Cort\'es are with the
    Department of Mechanical and Aerospace Engineering, UC San Diego,
    \{aallibho,cortes\}@ucsd.edu}}

    \def\BibTeX{{\rm B\kern-.05em{\sc i\kern-.025em b}\kern-.08em
    T\kern-.1667em\lower.7ex\hbox{E}\kern-.125emX}}
\markboth{\journalname, VOL. XX, NO. XX, XXXX 2017}
{Author \MakeLowercase{\textit{et al.}}: Preparation of Papers for IEEE TRANSACTIONS and JOURNALS (February 2017)}
\pagestyle{empty}

\begin{document}

\maketitle
\thispagestyle{empty}

\begin{abstract}%
  We propose a distributed data-based predictive control scheme to
  stabilize a network system described by linear dynamics.  Agents
  cooperate to predict the future system evolution without knowledge
  of the dynamics, relying instead on learning a data-based
  representation from a single sample trajectory. We employ this
  representation to reformulate the finite-horizon Linear Quadratic
  Regulator problem as a network optimization with separable objective
  functions and locally expressible constraints. We show that the
  controller resulting from approximately solving this problem using a
  distributed optimization algorithm in a receding horizon manner is
  stabilizing. We validate our results through numerical simulations.
\end{abstract}

\begin{IEEEkeywords}
  Data-based control, network systems, predictive control of linear
  systems
  \end{IEEEkeywords}

\section{Introduction}\label{sec:intro}

\IEEEPARstart{W}{ith} the growing complexity of engineering systems,
data-based methods in control theory are becoming increasingly
popular, particularly for systems where it is too difficult to develop
models from first principles and parameter identification is
impractical or too costly. An important class of such systems are
network systems, which arise in many applications such as
neuroscience, power systems, traffic management, and robotics.
Without a system model, agents must use sampled data to characterize
the network behavior. However, the decentralized nature of the system
means that agents only have access to information that can be measured
locally, and must coordinate with one another to predict the network
response and decide their control actions. These observations motivate
the focus here on distributed data-based control of network systems
with linear dynamics.

\paragraph*{Literature Review}
Distributed control of network systems is a burgeoning area of
research, see e.g.,~\cite{FB-JC-SM:08cor, MM-ME:10, RRN-JMM:14} and
references therein.  In general, designing optimal controllers for
network systems is an NP-hard problem, but under certain conditions
optimal distributed controllers for linear systems can be obtained as
the solution to a convex program~\cite{MR-SL:06}.  When these
conditions do not hold, suboptimal controllers can be obtained by
convex relaxations \cite{FL-MF-MRJ:13, GF-RM-AK-JL:17} or convex
restrictions \cite{LF-YZ-AP-MK:19} of the original problem. Although
these methods produce distributed controllers, the computation of the
controller itself is typically done offline, in a centralized manner,
and requires knowledge of the underlying system model.  Reinforcement
learning (RL) is an increasingly popular approach for controlling
robots \cite{JK-AJB-JP:13} and multi-agent systems
\cite{LB-RB-BDS:08}. However, RL approaches typically require a very
large number of samples to perform effectively \cite{BR:19} and their
complexity makes it difficult to get stability, safety, and robustness
guarantees as is standard with other control approaches.  For
applications where safety assurances are required, model predictive
control (MPC) is widely used since performance and safety constraints
can be directly incorporated into an optimization problem that is
solved online. Several distributed MPC formulations are available for
multi-agent systems where the dynamics of the agents are coupled, such
as \cite{DJ-BK:02, WBD:07} where each agent implements a control
policy minimizing its own objective while accounting for network
interactions locally, or \cite{BTS-ANV-JBR-SJ-GP:10} where agents
cooperate to minimize a system-wide objective using a network
optimization algorithm.  Data-based approaches to predictive control
have also been proposed.  System identification~\cite{LL:99} is often
leveraged to learn a parameterized model which can then be used with
any of the MPC formulations previously mentioned. Methods for
implementing a controller directly from sampled data without any
intermediate identification also exist. The fundamental lemma from
behavioral systems theory \cite{JCW-PR-IM-BLMDM:05}, which
characterizes system trajectories from a single sample trajectory, has
recently gained attention in the area of data-based control
\cite{USP-MI:09, TMM-PR:17, CDP-PT:19}, and has been used for
predictive control in the recently developed DeePC
framework~\cite{JC-JL-FD:19, JB-JK-MAM-FA:19}.  Our work here extends
the DeePC framework to network systems where each node only has
partial access to the~data.

\paragraph*{Statement of Contributions}
We develop distributed data-based feedback controllers for network
systems\footnote{Throughout the paper, we make use of the following
  notation. Given integers, $a, b \in \integers$ with $a < b$, let
  $[a, b] = \{ a, a + 1, \dots, b \}$. Let $\Gc = (\Vc, \Ec)$ be an
  undirected graph with $N$ nodes, where $\Vc = [1, N]$ and $\Ec
  \subset \Vc \times \Vc$. The neighbors of $i \in \Vc$ are $\Nc_i =
  \{ j : (i, j) \in \Ec\}$. Given $S = \{ s_1, s_2, \dots, s_M \}
  \subseteq [1, N]$ and a vector $x = [x_1^\trans, x_2^\trans, \dots,
  x_N^\trans ]$, we denote $x_S = \begin{bmatrix} x_{s_1}^\trans &
    x_{s_2}^\trans & \cdots & x_{s_M}^\trans
\end{bmatrix}$. For $x_i \in \real^{d_i}$ with $i \in [1,K]$, we let
$\col(x_1, x_2, \dots, x_K) = [x_1^\trans, x_2^\trans, \dots,
x_K^\trans]^\trans$.  For positive semidefinite $Q \in \real^{n \times
n}$, we denote $\norm{x}_Q = \sqrt{x^\trans Q x}$.  For $M \in
\real^{n \times m}$, we denote by $M^\dagger$ its Moore-Penrose
pseudoinverse.  The Hankel matrix of a signal $w:[0, T] \to \real^k$
with $t \le T$ block rows is the~$kt \times (T - t + 1)$ matrix
\[  
\Hc_t(w) = \begin{bmatrix}
w(0)     & w(1)   & \cdots & w(T - t) \\
w(1)     & w(2)   & \cdots & w(T - t + 1) \\
\vdots   & \vdots & \ddots & \vdots \\
w(t - 1) & w(t) & \cdots & w(T - 1)
\end{bmatrix} .
\]
Given two signals $v_1:[0, T - 1] \to \real^{k_1}$ and $v_2:[0, T - 1]
\to \real^{k_2}$, let $v = \col(v_1, v_2)$ be the signal where $v(t) =
v_1(t)$ for $0 \leq t < T$, and $v(t) = v_2(t - T)$ for $T \leq t <
2T$.}.  A group of agents whose state evolves according to unknown
coupled linear dynamics each have access to their own state and those
of their neighbors in some sample trajectory. Their collective
objective is to drive the network state to the origin while minimizing
a quadratic objective function without knowledge of the system
dynamics. The approach we use computes the control policy online and
in a distributed manner by extending the DeePC formalism to the
network case.  Building upon the fundamental lemma, we introduce a new
distributed, data-based representation of possible network
trajectories. We use this representation to pose the control synthesis
as a network optimization problem, without state or input constraints,
in terms of the data available to each agent.  We show that this
optimization problem is equivalent to the standard finite-horizon
Linear Quadratic Regulation (LQR) problem and introduce a primal-dual
method along with a suboptimality certificate to allow agents to
cooperatively find an approximate solution.  Finally, we show that the
controller that results from implementing the distributed solver in a
receding horizon manner is stabilizing.

\section{Preliminaries}\label{sec:preliminaries}

We briefy recall here basic concepts on the identifiability of Linear
Time-Invariant (LTI) systems from data. Given $t, T_d \in \intnonneg$ with
$t < T_d$, a signal $w:[0, T_d - 1] \to \real^k$ is \textit{persistently
  exciting of order} $t$ if $\rowrank \Hc_t(w) = kt$.  Informally,
this means that any arbitrary signal $v:[0, t - 1] \to \real^k$ can be
described as a linear combination of windows of width $t$ in the
signal $w$. A necessary condition for persistence of excitation is $T_d
\geq (k + 1)t - 1$.

\begin{lemma}\longthmtitle{Fundamental
    Lemma~\cite{JCW-PR-IM-BLMDM:05}}\label{lem:fundamental}
  Consider the LTI system $x(t + 1) = Ax(t) + Bu(t)$, with $(A, B)$
  controllable. Let $u^d:[0, T_d - 1] \to \real^m$, $x^d:[0, T_d - 1] \to \real^n$
  be sequences such that $w^d = \col(u^d, x^d)$ is a trajectory of the
  system and $u^d$ is persistently exciting of order $n + \tau$. Then
  for any pair $u:[0, \tau - 1] \to \real^m$, $x:[0, \tau - 1] \to \real^n$,
  $w = \col(u, x)$ is a trajectory of the system if and only if there
  exists $g \in \real^{T_d -\tau - 1}$ such that $\Hc_\tau(w^d)g =w$.
\end{lemma}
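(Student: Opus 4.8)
\emph{Strategy.} I would prove the two implications separately: the ``if'' direction is immediate from linearity, while for the ``only if'' direction I would reduce the claim to a rank identity for the Hankel matrix $\Hc_\tau(w^d)$. For the former, suppose $w = \Hc_\tau(w^d) g$. Each column of $\Hc_\tau(w^d)$ is a length-$\tau$ window $\col(w^d(j), \dots, w^d(j+\tau-1))$ of the trajectory $w^d$, hence, by time-invariance, itself a trajectory of $x(t+1) = Ax(t) + Bu(t)$ on $[0, \tau-1]$. The set $\Bc_\tau$ of such trajectories is a linear subspace of $\real^{(m+n)\tau}$, being cut out by the equations $x(t+1) - Ax(t) - Bu(t) = 0$, $t = 0, \dots, \tau-2$. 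Thus $w = \Hc_\tau(w^d) g \in \Bc_\tau$ is a trajectory.

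\emph{Reduction.} Since $\colspan \Hc_\tau(w^d) \subseteq \Bc_\tau$ always holds by the above, a trajectory $w \in \Bc_\tau$ is a column combination iff $\colspan \Hc_\tau(w^d) = \Bc_\tau$, which, given the inclusion, is equivalent to $\rank \Hc_\tau(w^d) = \dim \Bc_\tau$. A length-$\tau$ trajectory is freely and uniquely parametrized by $\col(x(0), u(0), \dots, u(\tau-1)) \in \real^{n+m\tau}$, the remaining states being forced by the recursion, so $\dim \Bc_\tau = n + m\tau$, and it remains to show $\rank \Hc_\tau(w^d) = n + m\tau$.

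\emph{The rank identity.} The bound ``$\le$'' is immediate from $\colspan \Hc_\tau(w^d) \subseteq \Bc_\tau$. For ``$\ge$'', row operations using $x^d(j+k) = A^k x^d(j) + \sum_{\ell=0}^{k-1} A^{k-1-\ell} B\, u^d(j+\ell)$ turn the state block rows of $\Hc_\tau(w^d)$ into combinations of its input block rows together with the single block $X_0 := \begin{bmatrix} x^d(0) & \cdots & x^d(T_d - \tau)\end{bmatrix}$, so $\rank \Hc_\tau(w^d) = \rank \begin{bmatrix}\Hc_\tau(u^d) \\ X_0\end{bmatrix}$; it thus suffices that the latter have full row rank $n + m\tau$ (this is also exactly what is needed to solve $\Hc_\tau(w^d) g = w$ for arbitrary $w \in \Bc_\tau$: pick $g$ matching the prescribed inputs and initial state, and the easy direction forces the rest). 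Let $\col(\alpha_0, \dots, \alpha_{\tau-1}, \beta)$ be in the left kernel, i.e.\ $\sum_{k=0}^{\tau-1} \alpha_k^\trans u^d(j+k) + \beta^\trans x^d(j) = 0$ for all $j$. If $\beta = 0$ this contradicts full row rank of $\Hc_\tau(u^d)$, which follows from persistence of excitation of order $n + \tau$; so take $\beta \ne 0$. Iterating the state recursion rewrites, for $i = 0, \dots, n$, the quantity $\beta^\trans A^i x^d(j)$ as a fixed linear combination of the input window $u^d(j), \dots, u^d(j + \tau - 1 + i)$; feeding these into the Cayley--Hamilton identity $A^n = -\sum_{i=0}^{n-1} c_i A^i$ applied to $x^d(j)$ yields one linear relation among width-$(n+\tau)$ windows of $u^d$, valid for all admissible $j$. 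Persistence of excitation of order $n + \tau$ forces every coefficient to vanish; reading them off in order gives first $\alpha_{\tau-1} = \dots = \alpha_0 = 0$, whence $\beta^\trans x^d(j) = 0$ for all $j$; re-running the same computation with the $\alpha_k$ now gone yields $\beta^\trans \begin{bmatrix} B & AB & \cdots & A^{n-1}B\end{bmatrix} = 0$, and controllability of $(A,B)$ then forces $\beta = 0$. Hence the left kernel is trivial, so $\rank \begin{bmatrix}\Hc_\tau(u^d)\\X_0\end{bmatrix} = n + m\tau$, completing the argument.

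\emph{Main obstacle.} The delicate step is this last one --- the Hankel rank bound, and in particular the coefficient bookkeeping in the Cayley--Hamilton-derived input relation. It is here that both hypotheses enter, and sharply: controllability is used only at the very end, through the full-rank reachability matrix, while persistence of excitation of order exactly $n + \tau$ is precisely what is required, the ``$+n$'' counting the unrollings of the dynamics performed before Cayley--Hamilton closes the recursion. The manipulations themselves are elementary; arranging them so that both conditions can be invoked cleanly is the real work.
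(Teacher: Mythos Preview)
The paper does not supply its own proof of this lemma: it is quoted from \cite{JCW-PR-IM-BLMDM:05} as a preliminary result and merely restated in state-space form, so there is nothing in the paper to compare against.

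That said, your argument is correct and follows the standard state-space route to the Fundamental Lemma. The ``if'' direction and the reduction to the rank identity $\rank \Hc_\tau(w^d) = n + m\tau$ are clean; the row-operation step collapsing the state block rows onto the single initial-state block $X_0$ is right (it is sometimes phrased as factoring out a block upper-triangular matrix built from powers of $A$ and $B$). The left-kernel argument is also sound: shifting the assumed relation lets you write each $\beta^\trans A^i x^d(j)$ purely in terms of the input window, Cayley--Hamilton closes this into a single relation among $n+\tau$ consecutive inputs, and persistence of excitation of order $n+\tau$ annihilates it. The coefficients do have the triangular structure you claim, so the $\alpha_k$ vanish from the top index down; the second pass with $\alpha = 0$ then yields $\beta^\trans A^k B = 0$ for $k = 0,\dots,n-1$, and controllability finishes. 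Your self-assessment is accurate: the coefficient bookkeeping is the only place requiring genuine care, and both hypotheses enter exactly where and how you indicate.
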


Lemma~\ref{lem:fundamental} is stated here in state-space form, even
though the result was originally presented in the language of
behavioral systems theory.  The result states that all trajectories of
a controllable LTI system can be characterized by a single trajectory
if the corresponding input is persistently exciting of sufficiently
high order, obviating the need for a model or parameter estimation
when designing a controller.

\section{Problem Formulation}\label{sec:problem}
Consider a network system described by an undirected graph $\Gc =
(\Vc, \Ec)$ with $N$ nodes. Each node corresponds to an agent with
sensing, communication, and computation capabilities. Each edge
corresponds to both a physical coupling and a communication link
between the corresponding agents.  A subset of the nodes $S \subset
\Vc$, with $|S| = M$, also have actuation capabilities via inputs
$u_{i} \in \real^{m_{i}}$. The system dynamics are~then
\begin{equation}\label{eq:dynamics}
  \small
  x_i(t + 1) = \left\{
    \begin{aligned}
    &A_{ii}x_i(t) + \sum_{j \in \Nc_i} A_{ij}x_j(t) + B_{i}u_{i} &
    i \in S ,\cr
    &A_{ii}x_i(t) + \sum_{j \in \Nc_i} A_{ij}x_j(t) & i \notin S ,
    \end{aligned}\right.
\end{equation}
where $x_i \in \real^{n_i}$, $A_{ij} \in \real^{n_i \times n_j}$ and
$B_{i} \in \real^{n_i \times m_{i}}$. Let $n = \sum_{i=1}^{N}n_i$
and $m = \sum_{i=1}^{M}m_i$ and define $x = \col(x_1, x_2, \cdots,
x_N) \in \real^n$ and $u = \col(u_1, u_2, \cdots, u_M) \in
\real^m$. Let $A \in \real^{n \times n}$ and $B \in \real^{n \times
  m}$ be matrices so that~\eqref{eq:dynamics} takes the compact form
$x(t + 1) = Ax(t) + Bu(t)$.

To each node $i \in \Vc$, we associate an objective of the form
$J_i(x_i, u_{i}) = \sum_{t=0}^{T - 1} \norm{x_i(t)}_{Q_i} +
\norm{u_{i}(t)}_{R_{i}}$ when $i \in S$ and $J_i(x_i) = \sum_{t=0}^{T
  - 1} \norm{x_i(t)}_{Q_i}$ otherwise.  Here, each $Q_i \in \real^{n_i
  \times n_i}$ is positive semidefinite, each $R_{i} \in \real^{i
  \times i}$ is positive definite, $\col(u, x)$ is a system
trajectory, and $T$ is the time horizon of trajectories being
considered.

Each node wants to drive its state $x_i$ to the origin while
minimizing~$J_i$ and satisfying the constraints.  The resulting
network objective function is the sum of the objective functions
across the nodes. Letting $Q = \blkdiag(Q_1, Q_2, \dots, Q_N)\in
\real^{n \times n}$ and $R = \blkdiag(R_{s_1}, R_{s_2}, \dots,
R_{s_M}) \in \real^{m \times m}$, this objective can be written as
\begin{equation*}%\label{eq:objective}
  \begin{aligned}
    J(x, u) &= \sum_{i \in S}
    J_i(x_i, u) + \sum_{i \in \Vc \setminus S} J_i(x_i)
  \end{aligned}
\end{equation*}
so that $J(x, u) = \sum_{t=0}^{T - 1}\norm{x(t)}_{Q} +
\norm{u(t)}_R$. If the system starts from $x(0) = x^0 \in \real^n$,
the agents' goal can be formulated as the network optimization
problem:
\begin{align}\label{eq:lqr}
  &\underset{u, x}{\text{minimize}} & & \sum_{t=0}^{T -
    1}\norm{x(t)}_{Q} + \norm{u(t)}_R
  \\
  \notag & \text{subject to} & & x(t + 1) = Ax(t) + Bu(t), &\text{for
    $t \in [0,T_{\text{lqr}}]$}
  \\
  \notag & & & x(0) = x^{0}, \; x(T) = 0.
\end{align}
Note that the agents' decisions on their control inputs are coupled
through the constraints. Since $R \succ 0$, if~\eqref{eq:lqr} is
feasible, its optimal state and input trajectories are unique.

A key aspect of this paper is that we consider scenarios where the
system matrices $A$ and $B$ are unknown to the network. Instead, we
assume that, for a set of given input sequences $\{u^{d}_{i}:[0, T_d
-1] \to \real^{m_{i}}\}_{i \in S}$, the corresponding state
trajectories $\{x^{d}_{i}:[0, T_d-1] \to \real^{n_i}\}_{i \in \Vc}$
are available, and each node $i \in \Vc$ has access to its own state
trajectory as well as those of its neighbors. Actuated nodes $i\in S$
also have access to their own input $u^d_{i}$, but this is unknown to
its neighbors~$ \Nc_i$. Our aim is to synthesize a control policy that
can be implemented by each node in a distributed way with data
available to it. The resulting controller should stabilize the system
to the origin while minimizing~$J(x, u)$.

\section{Data-Based Representation for
  Optimization}\label{sec:results}

Here, we introduce a data-based representation of system
trajectories that is employed to pose a network optimization problem
equivalent to~\eqref{eq:lqr}.  Throughout this section, we let
$x^d:[0, T_d - 1] \to \real^n$, $u^d:[0, T_d - 1] \to \real^m$ be
sequences such that $w^d(t) = \col(u^d(t), x^d(t))$ is a trajectory of
\eqref{eq:dynamics}.  Let
\[ 
w^d_i(t) =
\begin{cases}
  \col(u^d_{i}(t), x^d_{\Nc_i}(t), x^d_{i}(t)) &\text{if $i \in S$}
  \\
  \col( x^d_{\Nc_i}(t), x^d_{i}(t)) & \text{if $i \notin S$}
\end{cases},
\]
for each $i \in \Vc$ and $0 \leq t < T_d - 1$. Then $w_i^d$ is the
data available to each node. Let $u:[0, \tau] \to \real^m$, $x:[0,
\tau] \to \real^n$ be arbitrary sequences where $T_d \geq (n + m)\tau
- 1$. Define $w(t) = \col(x(t), u(t))$ and
\[
w_i(t) = \begin{cases}
  \col(u_{i}(t), x_{\Nc_i}(t), x_{i}(t)) &\text{if $i \in S$}
  \\
  \col( x_{\Nc_i}(t), x_{i}(t)) & \text{if $i \notin S$}
\end{cases}. 
\]
Let $k_i = n_i \tau + \sum_{j \in \Nc_i}n_j\tau + m_i\tau$ for $i \in
S$, and $k_i = n_i\tau + \sum_{j \in \Nc_i}n_j\tau$ otherwise. We
define $E_i \in \real^{k_i \times (m + n)\tau}$ to be the matrix
consisting of all ones and zeros such that $E_iw^d = w^d_i$ and $E_iw
= w_i$.

\subsection{Data-Based Representation of Network Trajectories}

Lemma~\ref{lem:fundamental} states conditions under which the behavior
of the system can be described completely by the Hankel matrix of the 
sampled data. Here we extend Lemma
\ref{lem:fundamental} to the setting of a network system to build a 
data-based representation of network trajectories using the Hankel 
matrices of the data available to each agent, $\Hc_\tau(w^d_i)$. We show that
under certain conditions the image of $\Hc_\tau(w^d_i)$ is the set of all 
possible trajectories of node $i$. 

\begin{proposition}\longthmtitle{Sufficiency of Date-Based Image
    Representation}\label{prop:sufficient}
  If for each $i \in \Vc$ there exists $g_i \in \real^{T_d - \tau + 1}$
  with $\Hc_\tau(w^d_i)g_i = w_i$, then $w$ is a trajectory
  of~\eqref{eq:dynamics}.
\end{proposition}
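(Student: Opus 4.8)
The plan is to exploit the fact that the network dynamics~\eqref{eq:dynamics} decompose node by node: the update of $x_i$ depends only on the quantities collected in $w_i$. For each $i \in \Vc$ I would introduce the linear map $\Theta_i$ acting on a length-$\tau$ local signal by $(\Theta_i w_i)(t) = A_{ii} x_i(t) + \sum_{j \in \Nc_i} A_{ij} x_j(t) + B_i u_i(t)$ when $i \in S$, and by the same expression without the last term when $i \notin S$, where $x_i(t)$, $x_j(t)$, $u_i(t)$ are read off from the block structure of $w_i(t)$. With this notation, $w = \col(u, x)$ is a trajectory of~\eqref{eq:dynamics} if and only if $x_i(t+1) = (\Theta_i w_i)(t)$ for every $i \in \Vc$ and every admissible $t$: stacking these identities over $i \in \Vc$ gives back exactly $x(t+1) = Ax(t) + Bu(t)$, because the $x_j$-block appearing inside $w_i$ for $j \in \Nc_i$ is, via $w_i = E_i w$, literally the $x_j$-block of the global signal $w$.

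The next step is to note that the columns of $\Hc_\tau(w^d_i)$ are exactly the length-$\tau$ windows of the local data signal $w^d_i$. Since $w^d$ is a trajectory of~\eqref{eq:dynamics}, so is each of its windows, hence each window of $w^d_i$, viewed as a length-$\tau$ local signal, satisfies the node-$i$ relation at every internal time step. Collecting, over $t$, the residual $x_i(t+1) - (\Theta_i w_i)(t)$ into a single linear map $\rho_i$ of the stacked local signal, this says that $\rho_i$ annihilates every column of $\Hc_\tau(w^d_i)$, i.e., $\rho_i \Hc_\tau(w^d_i) = 0$ as a matrix. Therefore $\rho_i\big(\Hc_\tau(w^d_i) g_i\big) = 0$, and since the hypothesis gives $\Hc_\tau(w^d_i) g_i = w_i$ we conclude $\rho_i(w_i) = 0$, i.e. $x_i(t+1) = (\Theta_i w_i)(t)$ for all admissible $t$. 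As this holds for every $i \in \Vc$, the equivalence of the first paragraph yields that $w$ is a trajectory of~\eqref{eq:dynamics}.

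I expect no deep obstacle: this direction uses neither persistence of excitation nor Lemma~\ref{lem:fundamental}, only linearity and time-invariance of the per-node dynamics, the fact that the certificates $g_i$ may be chosen independently at each node, and the fact that the per-node dynamics constraints, once assembled, reproduce the full network dynamics. The one thing that needs care is bookkeeping: making the selection matrices $E_i$ and the block extractions defining $\Theta_i$ and $\rho_i$ precise, treating the $i \in S$ and $i \notin S$ cases in parallel, and keeping the time indices consistent. It is worth noting why the converse (every network trajectory admits such a representation) is not immediate and is presumably handled separately: the local data $w^d_i$ need not be a trajectory of a controllable subsystem, so Lemma~\ref{lem:fundamental} cannot simply be applied node by node.
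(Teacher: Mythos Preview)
Your proposal is correct and follows essentially the same approach as the paper's proof. The paper carries out explicitly the componentwise computation you package into the linear maps $\Theta_i$ and $\rho_i$: it writes $w_i(t)=\sum_k g_i(k)\,w^d_i(t+k)$, extracts $x_i(t+1)=\sum_k g_i(k)\,x^d_i(t+k+1)$, substitutes the node-$i$ dynamics satisfied by $w^d$, and uses linearity to recover $x_i(t+1)=A_{ii}x_i(t)+\sum_{j\in\Nc_i}A_{ij}x_j(t)\,(+B_iu_i(t))$---which is exactly your $\rho_i\big(\Hc_\tau(w^d_i)g_i\big)=0$ unpacked.
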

\begin{proof}
  Writing $g_i = (g_i(0), g_i(1), \dots, g_i(T_d - \tau))^\trans$ so for
  all $0 \leq t < \tau - 1$, $w_i(t) = \sum_{k=0}^{T_d - \tau +
    1}g_i(k)w^d_i(t + k)$, it follows that $x_i(t + 1) = \sum_{k=0}^{T
    - \tau} g_i(k)x^d_i(t + k + 1)$ so for~$i \notin S$,
  \begin{align*}
    x_i(t + 1) &=\sum_{k=0}^{T_d - \tau}g_i(k)\Big( A_{ii}x^d_i(t + k) +
    \sum_{j \in \Nc_i}A_{ij}x^d_j(t + k) \Big) 
    \\ 
    &=A_{ii}x_i(t) + \sum_{j \in \Nc_i}A_{ij}x_j(t).
  \end{align*}
  By a similar computation, we can show that for each $i \in S$,
  \begin{align*}
    x_i(t + 1) = A_{ii}&x_i(t) + \sum_{j \in \Nc_i}A_{ij}x_j(t) + B_{i}u_{i}(t),
  \end{align*}
  which is consistent with \eqref{eq:dynamics}.
\end{proof}

Next we identify conditions for the converse of the above result to
hold, i.e., when the Hankel matrices of all the agents characterize
all possible network trajectories.

\begin{proposition}\longthmtitle{Necessity of Data-Based Image
    Representation}\label{prop:network} 
  If $(A, B)$ is controllable, $w^d$ is a trajectory of
  \eqref{eq:dynamics} and either
  \begin{enumerate}
  \item $u^d$ is persistently exciting of order $n +
    \tau$ \label{case:apriori};
  \item $\col(u^d_{i}(t), x_{\Nc_i}^d(t))$ is persistently exciting
    of order $n_i + \tau$ for each $i \in S$, and $x_{\Nc_i}^d$ is
    persistently exciting of order $n_i + \tau$ for each
    $i \in \Vc \setminus S$; \label{case:aposteriori}
  \end{enumerate}
  then for all $i \in \Vc$ there exists $g_i \in \real^{T_d - \tau + 1}$
  such that $\Hc_\tau(w^d_i)g_i = w_i$.
\end{proposition}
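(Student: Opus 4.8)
The plan is to reduce the network claim to two applications of the Fundamental Lemma (Lemma~\ref{lem:fundamental}) applied to a \emph{local} subsystem at each node $i$, one for $i\in S$ and one for $i\notin S$. The key observation is that the dynamics~\eqref{eq:dynamics} restricted to node $i$ can be read as a linear time-invariant system whose ``state'' is $x_i$ and whose ``input'' is the concatenation of the exogenous signals feeding node $i$: for $i\in S$ that input is $\col(u_i, x_{\Nc_i})$, and for $i\notin S$ it is $x_{\Nc_i}$. Concretely, for $i\notin S$ write $x_i(t+1) = A_{ii}x_i(t) + \bar B_i v_i(t)$ with $v_i = x_{\Nc_i}$ and $\bar B_i = [A_{ij}]_{j\in\Nc_i}$; for $i\in S$ do the analogous thing with $v_i = \col(u_i, x_{\Nc_i})$. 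With this identification, $w_i^d$ is exactly a trajectory $\col(v_i^d, x_i^d)$ of the local subsystem, and $w_i = \col(v_i, x_i)$ is the candidate trajectory we wish to express through $\Hc_\tau(w_i^d)$.

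The steps, in order: (i) Fix $i$ and set up the local subsystem $(A_{ii}, \bar B_i)$ as above, checking that $w_i^d$ is genuinely a trajectory of it (this is immediate from the fact that $w^d$ satisfies~\eqref{eq:dynamics}). (ii) Verify the persistency-of-excitation hypothesis required by Lemma~\ref{lem:fundamental} for this local subsystem, namely that the local input $v_i^d$ is persistently exciting of order $n_i + \tau$. In case~\ref{case:aposteriori} this is precisely the hypothesis assumed. In case~\ref{case:apriori}, I must argue that persistency of excitation of $u^d$ at order $n+\tau$ \emph{for the full system} implies the required local persistency of excitation; this follows because the full state trajectory $x^d$ is a linear image (through the controllability-type map of the global system) of $u^d$ and the initial condition, so the richness of $u^d$ propagates to richness of $\col(u_i^d, x_{\Nc_i}^d)$ — this is the only nontrivial deduction. (iii) Apply Lemma~\ref{lem:fundamental} to conclude that since $w_i = \col(v_i, x_i)$ is a trajectory of the local subsystem — which it is, because $w = \col(u,x)$ is a trajectory of~\eqref{eq:dynamics} by hypothesis, hence its $i$-th component satisfies the local dynamics — there exists $g_i \in \real^{T_d - \tau + 1}$ with $\Hc_\tau(w_i^d)g_i = w_i$. (iv) Repeat for every $i\in\Vc$.

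One point requiring care is that Lemma~\ref{lem:fundamental} is stated for \emph{controllable} pairs, and it is not obvious that each local pair $(A_{ii}, \bar B_i)$ is controllable — a node may not be reachable from its neighbors alone. I would handle this by noting that controllability is used in the Fundamental Lemma only to guarantee that \emph{every} length-$\tau$ trajectory lies in the Hankel image; since here we only need the \emph{specific} trajectory $w_i$ (which we already know is a genuine trajectory of the local subsystem) to lie in that image, the relevant statement goes through for the reachable subspace alone, or equivalently one applies the Fundamental Lemma to the Kalman-controllable subsystem containing the trajectory. Alternatively, and perhaps more cleanly, I would invoke the version of the fundamental lemma that drops controllability in favor of the weaker ``correctable/trimmable'' hypothesis, or simply restrict attention to the controllable part — the data and the trajectory both live there. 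This controllability subtlety is the main obstacle; the rest is bookkeeping with the block structure of $E_i$, $\bar B_i$, and the Hankel matrices. Finally, the deduction in step~(ii) for case~\ref{case:apriori} — propagating global excitation to local excitation — is the second place where a short but genuine argument (rather than a citation) is needed.
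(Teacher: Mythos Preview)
Your treatment of case~\ref{case:aposteriori} is essentially the paper's: view node $i$ as an LTI subsystem with state $x_i$ and input $\col(u_i,x_{\Nc_i})$ (or $x_{\Nc_i}$ for $i\notin S$), then invoke the Fundamental Lemma locally. However, your worry about local controllability is misplaced. The pair $(A_{ii},\bar B_i)$ \emph{is} controllable whenever $(A,B)$ is: given any target $x_i^f$, global controllability lets you steer the full state from $0$ to some $x^f$ with $i$th block $x_i^f$; the resulting $\bar x_i$ is a local trajectory from $0$ to $x_i^f$ driven by the local input $\col(\bar u_i,\bar x_{\Nc_i})$. This is exactly the argument the paper gives, so no Kalman decomposition or weakened Fundamental Lemma is needed.

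The genuine gap is in case~\ref{case:apriori}. Your plan is to deduce local persistency of excitation of $v_i^d=\col(u_i^d,x_{\Nc_i}^d)$ (resp.\ $x_{\Nc_i}^d$) from global persistency of excitation of $u^d$, and then proceed as in case~\ref{case:aposteriori}. This implication is \emph{false} in general: the neighbor states $x_{\Nc_i}^d$ are outputs of the global dynamics and can satisfy linear dependencies (e.g., from structural symmetries or low-dimensional neighbor subspaces) that make $\Hc_{n_i+\tau}(v_i^d)$ rank-deficient regardless of how rich $u^d$ is. Indeed, the paper's own Remark~\ref{rem:feas} and the Newman--Watts--Strogatz simulation exhibit a system where condition~\ref{case:apriori} holds but condition~\ref{case:aposteriori} fails---so your reduction cannot work. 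The paper's proof of case~\ref{case:apriori} avoids local subsystems entirely: it applies Lemma~\ref{lem:fundamental} \emph{once, globally}, to obtain a single $g$ with $\Hc_\tau(w^d)g=w$, and then simply projects via $E_i$ to get $\Hc_\tau(w_i^d)g=E_i\Hc_\tau(w^d)g=E_iw=w_i$, so $g_i=g$ for every $i$. This is both correct and considerably shorter than what you propose.
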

\begin{proof}
  In the case of \ref{case:apriori} we simply apply
  Lemma~\ref{lem:fundamental} to obtain $g \in \real^{T_d - \tau + 1}$
  where $\Hc_\tau(w^d)g = w$, and note that for all $i \in \Vc$, $w_i
  = E_iw = E_i\Hc_\tau(w^d)g = \Hc_\tau(w_i^d)g$, so the result
  follows by letting $g_i = g$.  For case \ref{case:aposteriori}, we
  think of $x_j$ for $j \in \Nc_i$ as an input to node~$i$. Letting $k
  = |\Nc_i|$, where $\Nc_i = \{j_1, j_2, \dots, j_{k} \}$, and
  defining
  \begin{align*}
    \small
    \tilde{B}_i = \begin{cases}
      \begin{bmatrix}
        A_{ij_1} & A_{ij_2} & \cdots & A_{ij_k} & B_{i}
      \end{bmatrix}  & i \in S \\[10pt]
      \begin{bmatrix}
        A_{ij_1} & A_{ij_2} & \cdots & A_{ij_k}
      \end{bmatrix} & i \notin S
    \end{cases},
  \end{align*}
  we have
  \begin{align*}
    \small
    x_i(t + 1) = \begin{cases}
      A_{ii}x_i(t) + \tilde{B}_i\col(x_{j_1}, x_{j_2}, \cdots,
      x_{j_k}, u_{i}) &i \in S \\ 
      A_{ii}x_i(t) + \tilde{B}_i\col(x_{j_1}, x_{j_2}, \cdots,
      x_{j_k}) &i \notin S
    \end{cases}
  \end{align*}
  Let $x^0_i \in \real^{n_i}$ be arbitrary, and $x^0 \in \real^n$ such
  that the $i$th block component is $x^0_i$. Since $(A, B)$ is
  controllable there exists an input $\bar{u}:[0, n] \to \real^m$ such
  the corresponding state trajectory $\bar{x}:[0, n] \to \real^m$ with
  $\bar{x}(0) = x^0$ has $\bar{x}(n) = 0$. Note that if $i \in S$,
  then $\bar{x}_i$ is the state trajectory corresponding to the input
  $\col(\bar{u}_{i}, \bar{x}_{\Nc_i})$, and $\bar{x}_i(0) = x_i^0$
  and $\bar{x}_i(n) = 0$. Likewise, if $i \notin S$, $\bar{x}_i$ is
  the state trajectory corresponding to the input $\bar{x}_{\Nc_i}$,
  and $\bar{x}_i(0) = x_i^0$ and $\bar{x}_i(n) = 0$. Hence $(A_{ii},
  \tilde{B}_i)$ is controllable for all $i \in \Vc$ and the result
  follows from Lemma \ref{lem:fundamental}.
\end{proof}

\begin{remark}\longthmtitle{Feasibility of Identifiability Conditions}
  \label{rem:feas}
  {\rm Proposition \ref{prop:network} gives conditions on when the
    data is rich enough to characterize all possible trajectories of
    the system. Condition (i) gives conditions on the input sequence,
    $u^d$, which guarantee \textit{a priori} the identifiability of
    the system from data.  This condition is generically true in the
    sense that the set of sequences $u^d$ which are not persistently
    exciting of order $n + \tau$ (even though for all $i \in S$,
    $u^d_i$ is) have zero Lebesgue measure. In general, it is
    difficult to verify condition (i) in a distributed manner.  On the
    other hand, it is straightforward to verify condition~(ii) using
    only information available to the individual agents. However this
    verification must be done in an ad hoc manner, after the input has
    been applied to the system. While the condition is sufficient for
    identifiability, there are systems where for all inputs $u^d$, the
    resulting trajectory $w^d$ will never satisfy it.  \oprocend}
\end{remark}

\subsection{Equivalent Network Optimization
  Problem}\label{subsec:network_opt}
Here, we build on the data-based image representation of network
trajectories in a distributed fashion to pose a network optimization
problem that can be solved with the data available to each agent,
which is equivalent to an LQR problem with a time horizon of $T$. Each
node can use this representation along with $T_\text{ini} > 0$ past
states and inputs to predict future trajectories assuming that the
hypotheses of Proposition~\ref{prop:network} are satisfied.  Formally,
let $\tau = T_\text{ini} + T + 1$ and let $u^\text{ini}:[0,
T_\text{ini} - 1] \to \real^m$ and $x^\text{ini}:[0, T_\text{ini} - 1]
\to \real^n$ be sequences such that $\col(u^\text{ini}, x^\text{ini})$
is a $T_\text{ini}$ long trajectory of the system. In the network
optimization we introduce below, we optimize over system trajectories
$\col(u, x)$ of length $\tau$, constrained so the first $T_\text{ini}$
samples of $u$ and $x$ are $u^\text{ini}$ and $x^\text{ini}$
resp. This plays a similar role to the initial condition constraint
in~\eqref{eq:lqr}.

For each node $i \in \Vc$, define
\begin{equation}\label{eq:constraints}
  H_i = \begin{bmatrix} \Hc_\tau(u^d_{i}) \\ \Hc_\tau(x^d_{\Nc_i})
    \\ \Hc_\tau(x^d_i)
  \end{bmatrix} \text{ if } i\in S \text{ and } H_i = \begin{bmatrix}
    \Hc_\tau(x^d_{\Nc_i}) \\ \Hc_\tau(x^d_i)
  \end{bmatrix} \text{ if } i \notin S .
\end{equation}
Consider the following problem
\begin{align}\label{eq:dlqr}
  & \underset{g_i, u_{i}, x_i}{\text{minimize}} & & \sum_{i \in S}
  J_i(x_i, u_{i}) + \sum_{i \in \Vc \setminus S} J_i(x_i)
  \\
  \notag & \text{subject to} & & H_ig_i = \col( u^\text{ini}_{i},
  u_{i}, x^\text{ini}_{\Nc_i}, x_{\Nc_i}, x^\text{ini}_i, x_i ), &i
  \in S
  \\
  \notag & & & H_ig_i = \col(x^\text{ini}_{\Nc_i},
  x_{\Nc_i},x^\text{ini}_i, x_i), &i \notin S&
  \\
  \notag & & & x_i(T) = 0 &i \in V.
\end{align}
Although \eqref{eq:dlqr} does not necessarily have a unique optimizer,
any optimizer $(g^*, u^*, x^*)$ of~\eqref{eq:dlqr} is such that $u^*$
and $x^*$ are the optimal input and state sequences of~\eqref{eq:lqr},
as formalized~next.

\begin{proposition}\longthmtitle{Equivalent Network
    Optimization}\label{prop:equivalence}
  Consider the system~\eqref{eq:dynamics} and sample trajectory $w^d$
  satisfying the hypotheses of Proposition \ref{prop:network} and let
  $x^0 = Ax^\text{ini}(T_\text{ini} - 1) + Bu^\text{ini}(T_\text{ini}
  - 1)$.  Then the following hold:
  \begin{enumerate}
  \item If problem~\eqref{eq:lqr} is feasible, then it has a unique
    optimizer;
  \item Problem \eqref{eq:dlqr} is feasible if and only
    if~\eqref{eq:lqr} is feasible;
  \item If \eqref{eq:dlqr} is feasible, $(u^{1 ,*}, x^{1, *})$ is the
    optimizer of \eqref{eq:lqr} and $(g^*, u^{2, *}, x^{2, *})$ is an
    optimizer of \eqref{eq:dlqr}, then $u^{1, *} = u^{2, *}$ and
    $x^{1, *} = x^{2, *}$.
  \end{enumerate}
\end{proposition}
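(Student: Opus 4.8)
The plan is to build an objective-preserving surjection from the feasible set of~\eqref{eq:dlqr} onto that of~\eqref{eq:lqr}, obtained simply by discarding the auxiliary decision variables $g_i$; with this in hand, parts (i)--(iii) follow with little extra work. For part~(i) I would just record the argument already sketched after~\eqref{eq:lqr}: the feasible set of~\eqref{eq:lqr} is an affine subspace (it is cut out by the linear recursion together with the two endpoint equalities), along which $x$ is an affine function of $u$; since $R\succ 0$, the cost restricted to this set is strictly convex in $u$, so a minimizer is unique in $u$ and hence also in $x$.

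Next I would set up the correspondence. Given a feasible pair $(u,x)$ of~\eqref{eq:lqr}, prepend the initial window: since $\col(u^{\text{ini}},x^{\text{ini}})$ is a $T_{\text{ini}}$-long trajectory of~\eqref{eq:dynamics} by hypothesis and $x^0 = Ax^{\text{ini}}(T_{\text{ini}}-1)+Bu^{\text{ini}}(T_{\text{ini}}-1)$, the concatenation $w$ of $(u^{\text{ini}},x^{\text{ini}})$ with $(u,x)$ is a length-$\tau$ trajectory of~\eqref{eq:dynamics} whose first $T_{\text{ini}}$ samples are $(u^{\text{ini}},x^{\text{ini}})$ and whose terminal state is $x(T)=0$. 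Because $w^d$ satisfies the hypotheses of Proposition~\ref{prop:network}, that result yields $g_i\in\real^{T_d-\tau+1}$ with $\Hc_\tau(w_i^d)g_i = w_i = E_i w$ for every $i\in\Vc$. The one bookkeeping point to check is that $H_i$ in~\eqref{eq:constraints} coincides with $\Hc_\tau(w_i^d)$ up to a fixed permutation of its block rows, and that the right-hand side $\col(u^{\text{ini}}_i,u_i,x^{\text{ini}}_{\Nc_i},x_{\Nc_i},x^{\text{ini}}_i,x_i)$ of~\eqref{eq:dlqr} is that same permutation applied to $w_i$; granting this, $\Hc_\tau(w_i^d)g_i=E_iw$ is exactly the $i$-th equality constraint of~\eqref{eq:dlqr}, and together with $x_i(T)=0$ for all $i$ this makes $(g,u,x)$ feasible for~\eqref{eq:dlqr}.

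For the reverse inclusion I would take a feasible $(g,u,x)$ of~\eqref{eq:dlqr}, where now $(u,x)$ is a single global trajectory shared across the per-agent constraints; undoing the permutation, those constraints say precisely $\Hc_\tau(w_i^d)g_i = E_i w$ for all $i$, with $w$ the concatenation of $(u^{\text{ini}},x^{\text{ini}})$ and $(u,x)$. Proposition~\ref{prop:sufficient} then gives that $w$ is a length-$\tau$ trajectory of~\eqref{eq:dynamics}; since its first $T_{\text{ini}}$ samples are $(u^{\text{ini}},x^{\text{ini}})$ and $x^0 = Ax^{\text{ini}}(T_{\text{ini}}-1)+Bu^{\text{ini}}(T_{\text{ini}}-1)$, its tail is a trajectory with initial state $x^0$ and, using $x_i(T)=0$ for all $i$, terminal state $0$, i.e.\ $(u,x)$ is feasible for~\eqref{eq:lqr}. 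Thus $(g,u,x)\mapsto(u,x)$ is a well-defined surjection between the two feasible sets; moreover it preserves the cost, since the objective of~\eqref{eq:dlqr}, read as a function of $(u,x)$, is literally $J(x,u)=\sum_{t=0}^{T-1}\norm{x(t)}_Q+\norm{u(t)}_R$. Parts (ii) and (iii) now follow: the feasible set of~\eqref{eq:dlqr} is nonempty iff its image, the feasible set of~\eqref{eq:lqr}, is; and by surjectivity and cost-preservation the two problems have the same optimal value, so the $(u,x)$-component of any optimizer of~\eqref{eq:dlqr} attains the optimal value of~\eqref{eq:lqr} and is therefore an optimizer of the latter, which by part~(i) is unique, giving $u^{1,*}=u^{2,*}$ and $x^{1,*}=x^{2,*}$.

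The step I expect to require the most care is the bookkeeping inside the correspondence: checking that $H_i$ and $\Hc_\tau(w_i^d)$ agree up to the row reordering encoded by $E_i$, and keeping the time indices aligned so that prepending $(u^{\text{ini}},x^{\text{ini}})$ genuinely produces a length-$\tau$ trajectory (and not one off by one). By contrast, all of the data-based content is already packaged in Propositions~\ref{prop:sufficient} and~\ref{prop:network}, whose hypotheses are assumed in the statement, so invoking them is routine.
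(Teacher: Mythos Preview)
Your proposal is correct and is precisely the argument the paper has in mind: the paper omits the proof but states that it is the network analogue of~\cite[Theorem~5.1 and Corollary~5.2]{JC-JL-FD:19} obtained by invoking Propositions~\ref{prop:sufficient} and~\ref{prop:network}, which is exactly the two-directional correspondence you construct. Your identification of the only delicate points---the block-row permutation relating $H_i$ to $\Hc_\tau(w_i^d)$ and the off-by-one bookkeeping in the concatenation so that $\tau=T_{\text{ini}}+T+1$---is accurate, and neither presents a real obstacle.
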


We omit the proof for space reasons, but note that it is the
  analogue of Theorem 5.1 and Corollary 5.2 in~\cite{JC-JL-FD:19} for
  the case of network systems once one invokes
  Propositions~\ref{prop:sufficient} and~\ref{prop:network} above.
Unlike the original network optimization problem~\eqref{eq:lqr}, for
which agents lack knowledge of the system matrices $A$, $B$, the
network optimization problem~\eqref{eq:dlqr} can be solved in a
distributed way with the information available to them.  The structure
of the problem (aggregate objective functions plus locally expressible
constraints) makes it amenable to a variety of distributed
optimization algorithms, see
e.g.,~\cite{SB-NP-EC-BP-JE:11,AC-BG-JC:17}.  In Section~\ref{sec:pd}
below, we employ a primal-dual dynamic to find asymptotically a
solution of~\eqref{eq:dlqr} in a distributed way.

\begin{remark}\longthmtitle{Scalability of Network
    Optimization}\label{rem:scalability}
  {\rm As the number of nodes in the network increases so does the
    state dimension, hence more data is required in order to maintain
    persistency of excitation. A necessary condition is $T_d \geq (n +
    m + 1)(T_\text{ini} + T) - 1$.  Assuming that $T \sim O(n)$,
    $T_\text{ini} \sim O(1)$, we have $T_d \sim O(n^2 +
      mn)$. The decision variable for each node is $z_i = \col(g_i,
    u_i, x_i)$ when $i \in S$ and $z_i = \col(g_i, x_i)$ otherwise.
    The size of $z_i$ is $O(n^2 + mn)$.  However, using the
    distributed optimization algorithm of Section~\ref{sec:pd}, agent
    $i$ only needs to send messages of size $O(k_i)$ to agent
      $j$.}  \oprocend 
\end{remark}

\section{Distributed Data-Based Predictive Control}
Here we introduce a distributed data-based predictive control scheme
to stabilize the system \eqref{eq:dynamics} to the origin, as
described in Section~\ref{sec:problem}. To do this, we solve
the network optimization problem~\eqref{eq:dlqr} in a receding horizon
manner with $u^\text{ini}$ and $x^\text{ini}$ updated every time step
based on the systems current state. The control scheme is summarized
in Algorithm~\ref{alg:dmpc}.

\begin{algorithm}[h]
  \caption{Distributed Data-Based Predictive Control}\label{alg:dmpc}
  \begin{algorithmic}[1]
  \State \algorithmicrequire ~Sample trajectory $w^d$, performance
  indices $(Q_i)_{i=1}^N$, $(R_i)_{i=1}^N$
  \State Initialize $H_i$ as in equation \eqref{eq:constraints}, let 
  $u^\text{ini}$ and $x^\text{ini}$ be the~$T_\text{ini}$ most recent 
  states and inputs respectively, and set $t = 0$.

  \While {$\norm{x} > 0$} \State Use a distributed optimization
  algorithm to obtain an approximate solution to \eqref{eq:dlqr},
  $\hat{z} = \col(\hat{g}, \hat{u}, \hat{x})$, such that
  $\lVert\hat{u} - u^*\rVert \leq \delta \min \{1, \lVert
  x^\text{ini}(T_\text{ini} - 1)\rVert \}$.
  \label{step:opt}
  \State Apply the input $\hat{u}(0)$. 
  \State Set $t$ to $t +1$ and $u^\text{ini}$ and $x^\text{ini}$ to 
  the $T_\text{ini}$
  most recent inputs and states respectively. 
  \EndWhile
  \end{algorithmic}
\end{algorithm}

The rest of the section proceeds by first showing that the controller
resulting from Algorithm~\ref{alg:dmpc} is stabilizing even when the
network optimization~\eqref{eq:dlqr} is solved only approximately; and
then introducing a particular distributed solver for~\eqref{eq:dlqr}
along with a suboptimality certificate to check, in a distributed
manner, the stopping condition in Step~\ref{step:opt} of
Algorithm~\ref{alg:dmpc}. 

\subsection{Stability Analysis of Closed-Loop System}
In the rest of the paper, we rely on the following assumption.

\begin{assumption}\label{a:stab_assumptions}
    Consider the system~\eqref{eq:dynamics},
  \begin{enumerate}
  \item the collected data satisfies the hypotheses of
    Proposition~\ref{prop:network}; \label{assumption}
  \item the optimization problem~\eqref{eq:dlqr} is feasible for all
    $(u^\text{ini}, x^\text{ini})$ which is a valid
    $T_\text{ini}$-sample long system trajectory.
  \end{enumerate}
\end{assumption}

Since the system is controllable (cf. \emph{(i)}), a sufficient
condition for guaranteeing the feasibility in \emph{(ii) is $T \geq
  n$}. In fact, in such case, there exists a trajectory $(u, x)$ such
that $x(T) = 0$. It follows that $(u, x)$ is feasible for
\eqref{eq:lqr}, so by Proposition \ref{prop:equivalence}, there exists
some $g$ such that $(g, u, x)$ is feasible for \eqref{eq:dlqr}.

Under Assumption~\ref{a:stab_assumptions}, the closed-loop
system with the controller corresponding to a receding horizon
implementation of~\eqref{eq:lqr} is globally exponentially stable,
cf.~\cite[Theorem 12.2]{FB-AB-MM:17}.  Unlike \cite{JC-JL-FD:19}, we
do not assume we have access to the exact solution of \eqref{eq:dlqr}
since distributed optimization algorithms typically only converge
asymptotically to the true optimizer and must be terminated in finite
time. Here we show that Algorithm~\ref{alg:dmpc} still stabilizes the
system when the tolerance~$\delta$ is sufficiently small.

\begin{theorem}\longthmtitle{Distributed, Data-Based Predictive
    Control is Stabilizing}\label{thm:stability}
  For $\delta > 0$, let $\phi_\delta:\real^n \to \real^m$ be the
  feedback control corresponding to Algorithm~\ref{alg:dmpc}.  Under
  Assumption~\ref{a:stab_assumptions}, there exists $\delta^* > 0$
  such that for all $\delta < \delta^*$, the origin is globally
  asymptotically stable with respect to the closed-loop dynamics $x(t
  + 1) = Ax(t) + B\phi_\delta(x(t))$.
\end{theorem}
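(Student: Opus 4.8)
The plan is to view Algorithm~\ref{alg:dmpc} as a perturbation---vanishing with the state---of the exact receding-horizon LQR controller, whose closed loop is already known to be globally exponentially stable, and to absorb the perturbation with a Lyapunov argument on an extended state.

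\emph{Step 1: the nominal closed loop.} Under Assumption~\ref{a:stab_assumptions} together with the sufficient condition $T \ge n$ noted above, \eqref{eq:lqr} is feasible for every $x^0 \in \real^n$. Eliminating the states via the dynamics, \eqref{eq:lqr} becomes a strictly convex (since $R \succ 0$) quadratic program in the inputs with a single linear equality constraint ($x(T)=0$) whose data depends linearly on $x^0$; its unique optimizer is therefore linear in $x^0$, so the exact receding-horizon law has the form $\phi_0(x) = Kx$ for a fixed gain $K$, and by Proposition~\ref{prop:equivalence} solving~\eqref{eq:dlqr} exactly yields the same first input. By \cite[Theorem~12.2]{FB-AB-MM:17} the nominal closed loop $x^+ = (A+BK)x$ is globally exponentially stable, hence $A+BK$ is Schur; fix $P \succ 0$ with $(A+BK)^\trans P(A+BK) - P \preceq -I$ and set $V(x) = x^\trans P x$, so that $\lambda_{\min}(P)\norm{x}^2 \le V(x) \le \lambda_{\max}(P)\norm{x}^2$. (One may instead invoke a converse Lyapunov function for the cited exponential stability, with quadratic sandwich bounds and a linear-growth gradient bound; linearity of $\phi_0$ is convenient but not essential.)

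\emph{Step 2: a closed-loop recursion.} Along the trajectory produced by Algorithm~\ref{alg:dmpc}, at step $t$ the initial-trajectory window ends at the previous state $x(t-1)$ while the predicted initial state equals $x(t)$, so the applied input is $\hat{u}(0) = Kx(t) + e_t$ where, by Step~\ref{step:opt}, $\norm{e_t} \le \norm{\hat{u} - u^*} \le \delta\min\{1,\norm{x(t-1)}\} \le \delta\norm{x(t-1)}$, and thus $x(t+1) = (A+BK)x(t) + Be_t$. Expanding $V(x(t+1))$ and applying Young's inequality to the cross term gives, for all $t$,
\begin{equation*}
  V(x(t+1)) \le (1 - 2\kappa)\,V(x(t)) + C\delta^2\,V(x(t-1)),
\end{equation*}
where $\kappa \in (0,\tfrac12)$ and $C>0$ depend only on $\lambda_{\min}(P)$, $\lambda_{\max}(P)$, $\norm{A+BK}$ and $\norm{B}$, and in particular not on $\delta$.

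\emph{Step 3: conclusion and the main obstacle.} Take the extended Lyapunov candidate $W(t) = V(x(t)) + \kappa V(x(t-1))$, which is bounded above and below by positive multiples of $\norm{x(t)}^2 + \norm{x(t-1)}^2$; the recursion of Step~2 gives $W(t+1) \le (1-\kappa)V(x(t)) + C\delta^2 V(x(t-1))$, so choosing $\delta^*>0$ with $C(\delta^*)^2 \le \kappa(1-\kappa)$ ensures $W(t+1) \le (1-\kappa)W(t)$ for every $\delta < \delta^*$. Hence $W$, and therefore $\norm{x(t)}$, decays geometrically from any initial condition, which yields global exponential---in particular global asymptotic---stability of the origin for $x(t+1) = Ax(t) + B\phi_\delta(x(t))$. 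The one delicate point is that Step~\ref{step:opt} bounds the input error by a multiple of the \emph{previous} closed-loop state rather than the current one, which rules out a direct one-step Lyapunov decrease and forces the delay recursion of Step~2 and the extended candidate $W$; the remaining work is bookkeeping, namely checking that the constant $C$ above is genuinely independent of $\delta$ (it involves only norms of $P$, $A+BK$ and $B$) and that $T\ge n$ indeed makes \eqref{eq:lqr} feasible on all of $\real^n$, so that $\phi_0$ and $V$ are globally defined.
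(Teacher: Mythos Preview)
Your argument is correct and is a genuinely different route from the paper's proof. The paper treats the approximate controller as a disturbance on the exact MPC closed loop and invokes input-to-state stability: it uses the optimal value function $J^*$ as an ISS-Lyapunov function (with the regularity result \cite[Theorem~6.9]{FB-AB-MM:17}), appeals to \cite{AJ-ASM:02} for the ISS property and to \cite{ZPJ-YW:01} for a linear ISS gain $\gamma$, and then runs an inductive ``shrinking-ball'' argument showing $\norm{x(t)} \le (k{+}1)(\gamma\delta)^k$ for $t\ge T_k$, together with a separate Lyapunov-stability step; the threshold is $\delta^* = \gamma^{-1}$. You instead exploit that, in the absence of state and input constraints, the exact receding-horizon law is \emph{linear}, $\phi_0(x)=Kx$, so $A+BK$ is Schur and an explicit quadratic $V(x)=x^\top P x$ is available; the one-step delay in the error bound is absorbed by the extended candidate $W(t)=V(x(t))+\kappa V(x(t-1))$, yielding a clean contraction $W(t+1)\le(1-\kappa)W(t)$.

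What each approach buys: your proof is more elementary (no ISS machinery or value-function differentiability), gives a strictly stronger conclusion (global \emph{exponential} stability, not just asymptotic), and produces a computable $\delta^*$ in terms of $\lambda_{\min}(P)$, $\lambda_{\max}(P)$, $\norm{A+BK}$ and $\norm{B}$. The paper's ISS argument, on the other hand, does not hinge on linearity of $\phi_{\text{mpc}}$ and would carry over more readily to settings with state or input constraints where the receding-horizon law is only piecewise affine. One minor point worth making explicit in your write-up: since the error bound involves $x(t-1)$, the closed loop is really a map on the extended state $(x(t-1),x(t))$, and your $W$ is a Lyapunov function on that space; global asymptotic stability of the origin in the sense of the theorem follows because $W$ dominates $\norm{x(t)}^2$.
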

\begin{proof}
  Let $\phi_\text{mpc}:\real^n \to \real^m$ be the feedback
  corresponding to a receding horizon implementation of
  \eqref{eq:lqr}.  Consider the system $ x(t + 1) = f(x(t), v(t)), $
  where $ f(x, v) = A + B\phi_\text{mpc}(x) + Bv$.  Let $J^*(x^0) =
  J(x^*, u^*)$, where $( u^*, x^*)$ is an optimizer of
  \eqref{eq:lqr}. Because \eqref{eq:dlqr} is nondegenerate, $J^*$ is
  continuously differentiable, cf.~\cite[Theorem~6.9]{FB-AB-MM:17},
  and the system is input-to-state stable (ISS) with $J^*$ being a
  ISS-Lyapunov function satisfying $ J^*(f(x, v)) - J^*(x) \leq
  -\alpha \norm{x}^2 + \sigma \norm{v}^2 $ for constants $\alpha,
  \sigma > 0$, cf. \cite[Theorem 1]{AJ-ASM:02} (albeit the result is
  stated there for systems where $A$ is Schur stable, the same
  reasoning is valid when there are no state or input constraints and
  $A$ is unstable).  Because the system $x(t + 1) = f(x(t), 0)$
  without disturbances is exponentially stable
  \cite{DQM-JBR-CVR-POMS:00}, it follows by \cite[proof of Lemma
  3.5]{ZPJ-YW:01} that the gain function is linear, so there exist
  $\gamma > 0$ and a class $\Kc\Lc$ function $\beta$ such~that
  \[
  \norm{x(t)} \leq \beta(\norm{x(0)}, t) + \gamma \sup_{0 \leq \tau
    \leq t} \norm{v(\tau) },
  \]
  for all $t \in \intnonneg$.  Let $x:\intnonneg \to \real^n$ be a
  trajectory of the closed-loop dynamics of \eqref{eq:dynamics} with
  the controller described by Algorithm~\ref{alg:dmpc} where $\delta <
  \delta^*= \gamma^{-1}$ and define $v(t) = \phi_\delta(x(t)) -
  \phi_\text{mpc}(x(t))$. It follows that $x(t + 1) = f(x(t), v(t))$.
  Note that $\phi_\text{mpc}(x(t)) = u^*(0; u^\text{ini},
  x^\text{ini})$, where $u^\text{ini} = \phi_\delta(x(t - 1))$ and
  $x^\text{ini} = x(t - 1)$ so it follows~that $ \norm{v(t)} =
  \norm{\hat{u}(0; u^\text{ini}, x^\text{ini}) - u^*(0; u^\text{ini},
    x^\text{ini})} \leq \delta \norm{x(t - 1)}.  $ We claim that for
  all $k \in \intpos$, there exists $T_k \in \intpos$ such that
  $\norm{x(t)} \leq (k + 1)(\gamma\delta)^k$ whenever $t \geq
  T_k$. The case when $k=1$ follows by observing that $\norm{x(t)}
  \leq \beta(\norm{x(0)}, t) + \gamma\delta$ and there exists $T_1$
  such that $\beta(\norm{x(0)}, t) < \gamma\delta$ for all $t \geq
  T_1$. If the claim holds for some $k$, then for all $t \geq T_k +
  1$,
  \begin{align*}
    \norm{x(t)}
    &\leq \beta((k + 1)(\gamma\delta)^k, t) + \gamma \sup_{T_k + 1
      < \tau \leq t} \norm{v(\tau) }\\
    & \leq \beta((k + 1)(\gamma\delta)^k, t) +
      (k + 1)(\gamma\delta)^{k + 1},
  \end{align*}
  so by choosing $T_{k + 1}$ such that
  $\beta((k + 1)(\gamma\delta)^k, t) < (\gamma\delta)^{k + 1}$ for all
  $t \geq T_{k + 1}$, then
  $\norm{x(t)} < (k + 2)(\gamma\delta)^{k + 1}$ for all
  $t > T_{k + 1}$ and the claim follows by induction. Hence
  $ \limsup_{t \to \infty} \norm{x(t)} \leq \limsup_{k \to \infty} (k
    + 1)(\gamma\delta)^{k} = 0. $ To show global Lyapunov stability,
  let $\eta > 0$ be arbitrary, and suppose that $\norm{x(0)}$ is
  chosen so that $\beta(\norm{x(0)}, 0) < (1 - \gamma\delta)\eta$ and
  $\norm{x(0)} < \eta$. Then for all~$t > 0$,
  \begin{align*}
    \norm{x(t)} &\leq (1 - \gamma\delta)\eta + \gamma \sup_{0 \leq \tau
    \leq t} \norm{v(\tau) } \\
    &\leq (1 - \gamma\delta)\eta + \gamma\delta
  \norm{x(t - 1)}.
  \end{align*}
   If $\norm{x(t - 1)} < \eta$, then
  $\norm{x(t)} < \eta$. It follows by induction on $t$ that
  $\norm{x(t)} < \eta$ for all $\eta$.
\end{proof}

\subsection{Primal-Dual Solver for Network Optimization}\label{sec:pd} 

In this section we introduce a method for solving the optimization
problem \eqref{eq:dlqr} in a distributed way. 
We let
  \[ z_i = \begin{cases}
    \col(g_i, u_i, x_i) & i \in S, 
    \\
    \col(g_i, x_i) & i \notin S,
   \end{cases}
\]
and $z = \col(z_1, \dots, z_N)$. Note $z_i \in \real^{d_i}$, where
$d_i = T - (T_\text{ini} + T) + 1 + n_i + m_i$ for
$i \in S$ and $d_i = T - (T_\text{ini} + T) + 1 + n_i$
otherwise. Problem~\eqref{eq:dlqr} can be written~as
\begin{align}\label{eq:dqp}
  \underset{z_i \in \real^{d_i}}{\text{minimize}}\qquad & \sum_{i \in
    \Vc} \norm{z_i}_{\Qc_i}^2
  \\
  \notag \text{subject to}\qquad
  & \Ac_i z_{\Nc_i} = b_i.
\end{align}
for suitable $\Qc_i \in \real^{d_i \times d_i}$, $\Qc_i \succeq 0$,
$\Ac_i \in \real^{c_i \times d_i}$,, and $b_i \in \real^{c_i}$, with
$i, c_i \in \integers$.  The Lagrangian of~\eqref{eq:dqp}~is
\begin{align*}
  \Lc(z, \lambda) = \sum_{i \in \Vc} \norm{z_i}_{\Qc_i}^2 +
  \lambda_i^\trans(\Ac_i z_{\Nc_i} - b_i).
\end{align*}
If $\lambda^*$ is an optimizer of the dual problem, then the pair
$(z^*, \lambda^*)$ is a (min-max) saddle point of $\Lc$, meaning that
$ \Lc(z^*, \lambda) \leq \Lc(z^*, \lambda^*) \leq \Lc(z, \lambda^*)$
for all $z \in \real^d$ and $\lambda \in \real^c$.  
The saddle-point
property of the Lagrangian suggests that the primal-dual flow, which
descends along the gradient of the primal variable and ascends along
the gradient of the dual variable,
\begin{equation}\label{eq:pdflow}
  \begin{aligned}
    \begin{bmatrix}
      \dot{z}_i \\ \dot{\lambda_i} 
    \end{bmatrix}  
    &= \begin{bmatrix}
      -\nabla_{z_i}\Lc(z, \lambda, \mu) \\ 
      \nabla_{\lambda_i}\Lc(z, \lambda, \mu) 
    \end{bmatrix}  \\
    &= \begin{bmatrix} -2\Qc_i z_i - F_{ii}\Ac_i^\trans
      \lambda_i - \sum_{j \in \Nc_i}
      F_{ij}\Ac_j^\trans \lambda_j \\
      \Ac_i z_{\Nc_i} + b_i 
    \end{bmatrix},
  \end{aligned}
\end{equation}
can be used to compute the optimizer. Here, $F_{ij} \in \real^{(d_i +
  \sum_{j\in\Nc_i}d_j) \times d_i}$ is the matrix such
that~$F_{ij}z_{\Nc_j} = z_i$.  By \cite[Corollary 4.5]{AC-BG-JC:17},
the flow converges asymptotically to a saddle point of~$\Lc$. This
procedure is fully distributed, since the flow equations
in~\eqref{eq:pdflow} can be computed with the information available to
each agent or its direct neighbors. In particular, if $j \in \Nc_i$,
then the message agent $j$ shares with agent $i$ consists of
$\col(x_j, \lambda_j) \in \real^{n_jT + k_j}$, which is $O(k_i)$
(cf. Remark~\ref{rem:scalability}).

We conclude by providing a certificate that can be used to verify the
stopping condition of Step~\ref{step:opt} in Algorithm~\ref{alg:dmpc}.

\begin{proposition}\longthmtitle{Suboptimality Certificate}
  \label{prop:cert}
  Let $u^*$ and $x^*$ denote the optimal input and state
    trajectories of \eqref{eq:lqr}, $y = \col(z, \lambda)$, $\Qc =
  \diag(\Qc_1, \Qc_2, \dots, \Qc_N)$, $\Ac = [F_1^\trans \Ac_1^\trans,
  F_2^\trans \Ac_2^\trans, \dots, F_N^\trans \Ac_N^\trans]^\trans$, $b
  = \col(b_1, b_2, \dots, b_N)$, and
  \begin{align*}
    M = \begin{bmatrix}
      -2\Qc^\trans & -\Ac^\trans  \\ \Ac & 0  
    \end{bmatrix} \qquad q = \begin{bmatrix}
      0 \\ b 
    \end{bmatrix}.
  \end{align*}
   Under Assumption \ref{a:stab_assumptions}, and with the flow
  given by~\eqref{eq:pdflow}, if $\lVert{\col(\dot{z}_i,
    \dot{\lambda}_i)}\rVert < \rho$ for all $i \in \Vc$, where $\rho =
  \frac{\delta^2}{N\norm{M^\dagger}^{2}}$, then $\norm{u - u^*} <
  \delta$.
\end{proposition}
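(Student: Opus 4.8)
The plan is to exploit that problem~\eqref{eq:dqp} is a convex quadratic program, so that the primal--dual flow~\eqref{eq:pdflow} is an \emph{affine} vector field. First I would stack the per-agent updates in~\eqref{eq:pdflow} through the selection matrices $F_{ij}$ to obtain the compact form $\dot y = My + q$ with $y = \col(z,\lambda)$ and $M$, $q$ exactly as in the statement (the matrices are constant precisely because $\nabla_z\Lc$ is linear and $\nabla_\lambda\Lc$ affine in $(z,\lambda)$). Under Assumption~\ref{a:stab_assumptions}, \eqref{eq:dqp} is feasible and convex with affine constraints, hence a saddle point $y^\sharp$ of $\Lc$ exists (as already invoked for the convergence of~\eqref{eq:pdflow}), and it satisfies $M y^\sharp + q = 0$. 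Therefore $\dot y = My + q = M(y - y^\sharp)$, so that $\dot y \in \range(M)$ and $M M^\dagger \dot y = \dot y$.

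The crux is then to invert this affine relation in the least-squares sense. I would set $w = M^\dagger \dot y$ and write $w = \col(w_z, w_\lambda)$ conformally with $y$. Since $M w = \dot y$, one gets $M(y - w) + q = (My + q) - Mw = \dot y - \dot y = 0$; that is, the shifted point $y - w$ satisfies the KKT system of~\eqref{eq:dqp} (stationarity of $\Lc$ in the primal variable together with primal feasibility). Because~\eqref{eq:dqp} is convex with affine constraints and feasible, these conditions are sufficient for global optimality, so the primal part $z - w_z$ is an optimizer of~\eqref{eq:dqp}, equivalently of~\eqref{eq:dlqr}. By Proposition~\ref{prop:equivalence}, the optimizer of~\eqref{eq:dlqr}, although non-unique in $g$, has input and state components equal to the unique optimal trajectory $(u^*, x^*)$ of~\eqref{eq:lqr}; hence $u - u^*$ equals exactly the input block of $w_z$, and $\norm{u - u^*} \le \norm{w_z} \le \norm{w} = \norm{M^\dagger \dot y} \le \norm{M^\dagger}\,\norm{\dot y}$.

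To finish, I would bound $\norm{\dot y}$ by the local residuals: partitioning $\dot y$ into the blocks $\col(\dot z_i, \dot\lambda_i)$, $i \in \Vc$, gives $\norm{\dot y}^2 = \sum_{i\in\Vc}\norm{\col(\dot z_i,\dot\lambda_i)}^2$, and combining this with the per-agent bound in the hypothesis and the value $\rho = \delta^2/(N\norm{M^\dagger}^2)$ yields $\norm{u - u^*} < \delta$. The main obstacle is the middle step: recognizing that the affine structure of the QP makes $y - M^\dagger \dot y$ an \emph{exact} KKT point---so its primal part is exactly, not merely approximately, optimal---and correctly handling the singularity of the saddle matrix $M$ (with $\Qc \succeq 0$ and $\Ac$ possibly rank deficient) via the pseudoinverse together with the observation $\dot y \in \range(M)$. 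A secondary point needing care is that only the input and state blocks, not the $g$ block, of $z - w_z$ can be identified with the optimizer of~\eqref{eq:lqr}; everything else is routine bookkeeping.
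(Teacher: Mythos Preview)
Your proposal is correct and follows essentially the same approach as the paper: both recognize that $\dot y = My+q$, that saddle points are exactly $\{y: My+q=0\}$, and that the distance from $y$ to this affine set equals $\lVert M^\dagger\dot y\rVert$, which is then bounded by $\lVert M^\dagger\rVert\lVert\dot y\rVert$. The only cosmetic difference is that the paper computes this distance via the projection $I-M^\dagger M$ onto $\ker M$, whereas you construct the nearest saddle point $y-M^\dagger\dot y$ directly using $\dot y\in\range(M)$; these are the same computation, and your handling of the possibly singular $M$ and of the non-uniqueness in the $g$-block via Proposition~\ref{prop:equivalence} matches the paper's reasoning.
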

\begin{proof}
  The set of saddle points of $\Lc$ is~$\Sc = \setdef{y}{My + q = 0
  }$.  Since the optimal input and state trajectories are unique, all
  saddle points share the property that, for each $i \in S$, the
  $(u_i,x_i)$ components of their $z_i$ equal $(u^*_i, x^*_i)$.  Given
  an arbitrary $y$, we have $\norm{u - u^*} \leq \norm{y - w}$ for all
  $w \in \Sc$, and hence $ \norm{u - u^*} \leq \inf_{w \in \Sc}\norm{y
    - w}$.  The set of saddle points can also be described as $\Sc =
  \{\hat{y}\} + \ker M$, for~any~$\hat{y} \in \Sc$.  Therefore,
  $\inf_{w \in \Sc}\norm{y - w} = \inf_{v \in \ker M}\norm{y - \hat{y}
    - v}$.  Since~$I - M^\dagger M$ is the orthogonal projection onto
  $\ker M$,
  \begin{align*}
    \inf_{v \in \ker M}&\norm{y - \hat{y} - v}=\norm{(y - \hat{y}) -
      (I - M^\dagger M)(y - \hat{y})}
    \\
    &\hspace{-0.5cm}=\norm{M^\dagger(My - M\hat{y})} = \norm{M^\dagger(My + q)}
    \leq \norm{M^\dagger}\norm{\dot{y}},
  \end{align*}
  and therefore,
  \[ 
  \norm{u - u^*}^2 \leq \norm{M^\dagger}\sum_{i \in
    \Vc}\norm{\dot{y}_i}^2 < \norm{M^\dagger}\sum_{i \in
    \Vc}\frac{\delta^2}{N\norm{M^\dagger}} = \delta^2.
  \]
\end{proof}

The suboptimality certificate can be checked in a fully distributed
manner using information locally available to each agent provided that
$\norm{M^\dagger}$ is known. Because $M$ depends only on the objective
$\Qc$ and constraints~$\Ac$, which in turn comes from the sample
trajectory $w^d$, it can be computed offline. Finally, it is possible
for each agent to compute a bound on $\norm{M^\dagger}$ using the fact
that, for $M = [ M_1^\trans, M_2^\trans, \dots M_N^\trans ]^\trans$,
one has $\norm{M^\dagger } \leq \lVert M_i^\dagger \rVert$ for all $i
\in \Vc$.  It follows that for all~$i \in \Vc$,
\[
\norm{M^\dagger} \leq \norm{ \begin{bmatrix} 2\Qc_i &
    F_{ii}\Ac_i^\trans & F_{ij_1}\Ac_{j_1}^\trans & \cdots &
    F_{ij_{|\Nc_i|}}\Ac_{j_{|\Nc_i|}}^\trans
    \end{bmatrix}^\dagger },
\]
for $ \{j_1, j_2, \dots, j_{|\Nc_i|} \} = \Nc_i$, so each agent can
compute a bound on $\norm{M^\dagger}$ using data available to itself
and its neighbors.

\begin{figure}[t!]
  \centering%
  \subfigure[Simulation of  Newman-Watts-Strogatz network with~$T = n$]{
    \hspace{-1.5ex}\includegraphics[width=.49\linewidth]{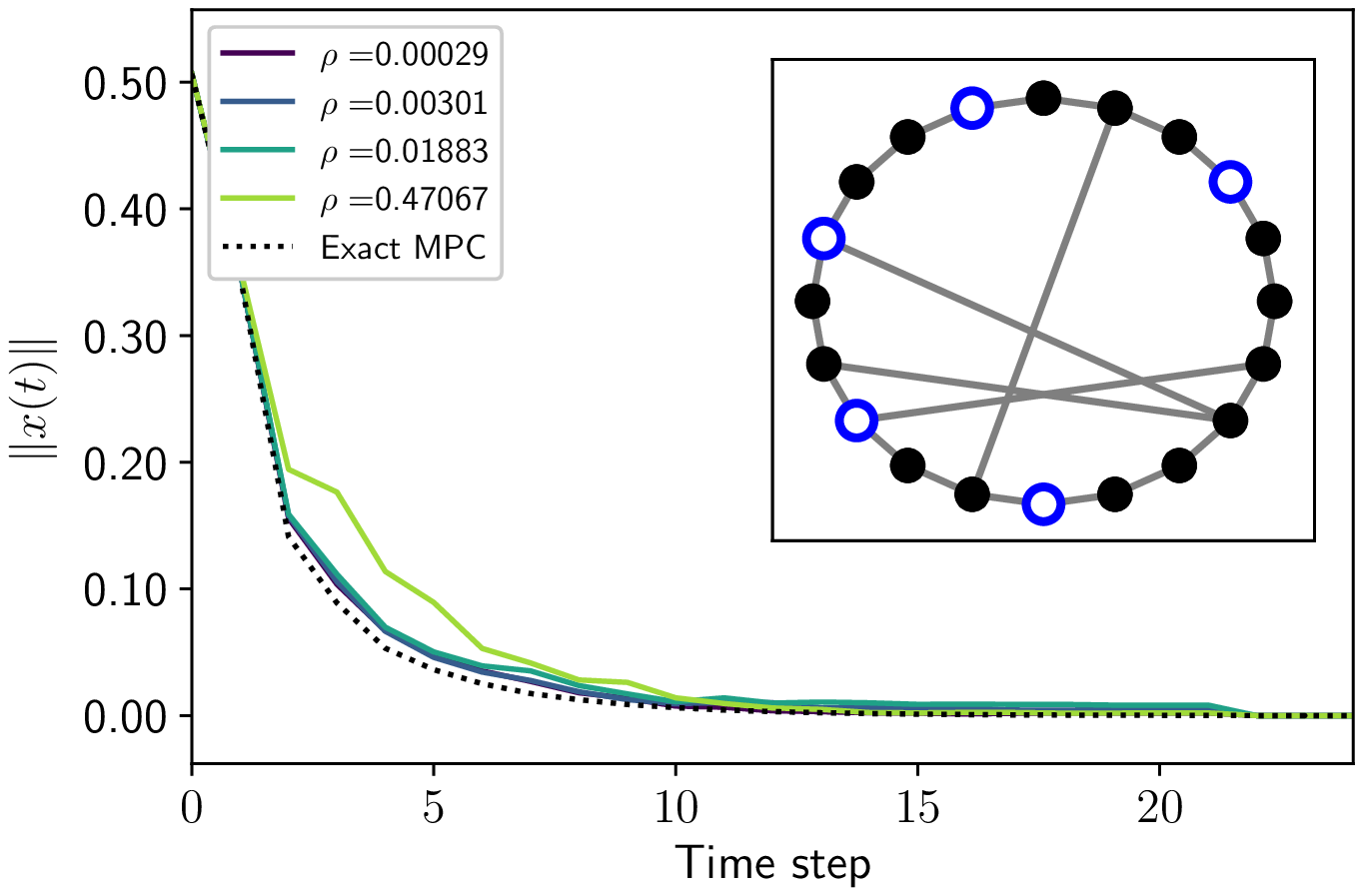}\label{fig:largeT} 
    \includegraphics[width=.49\linewidth]{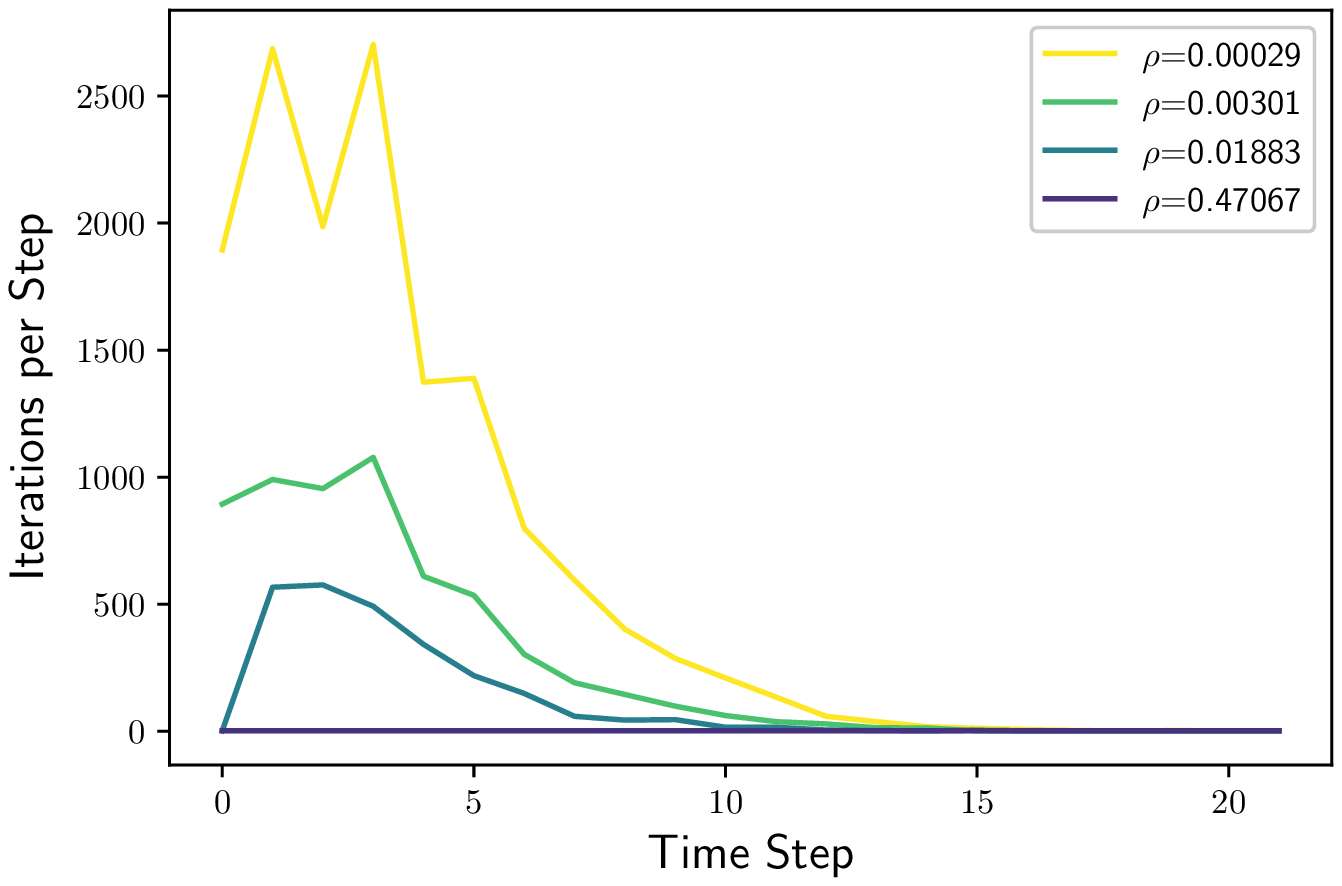}
    \label{fig:complex_iter}
    }
  \\
  \subfigure[Simulation of star network with $T = 5 < n$]{
    \hspace{-1.5ex}\includegraphics[width=.49\linewidth]{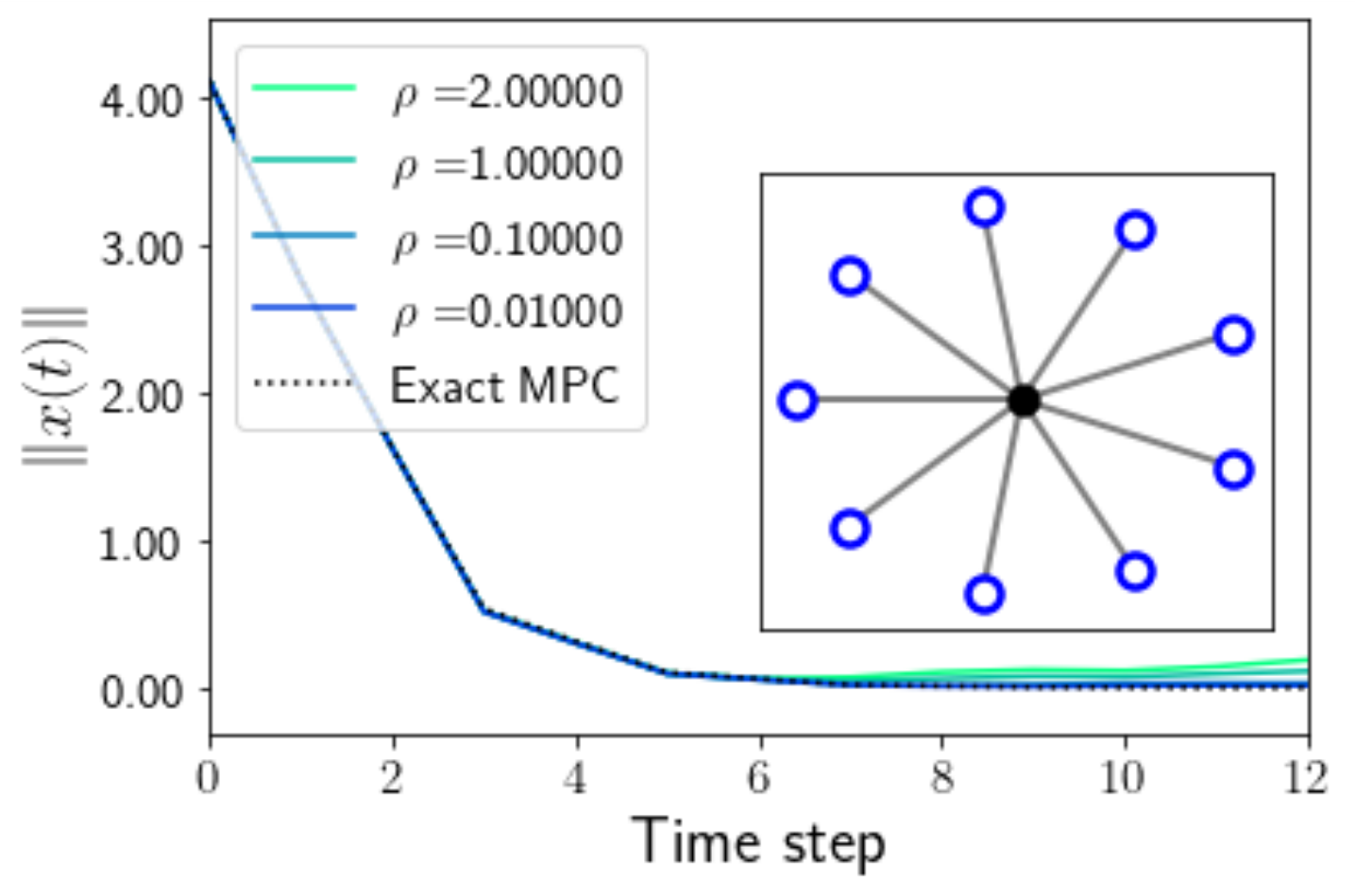}\label{fig:smallT}
    \includegraphics[width=.49\linewidth]{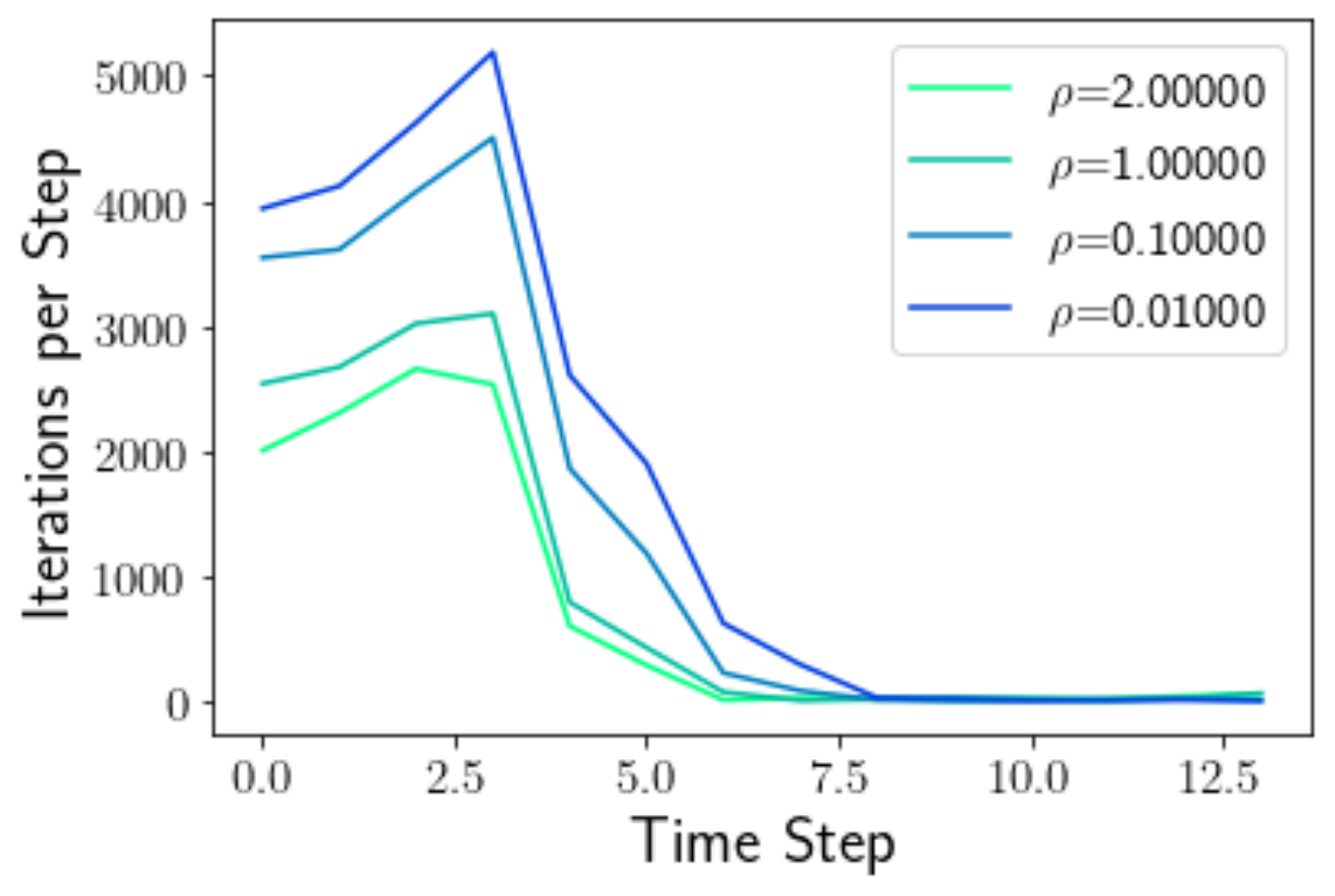}\label{fig:star_iter}
  }
  \caption{State trajectories of Data-based Receding Horizon
    Controller on various network topologies.}\label{fig:sim}
\vspace*{-2.5ex}
\end{figure}

\subsection{Numerical Simulations}
We simulate the proposed distributed data-based predictive controller
on a Newman-Watts-Strogatz network \cite{MEN-DJW:99} and a star
network. In each case, $A$ and $B$ are chosen at random so that $(A,
B)$ is controllable.  The input sequence is chosen as $u^d(t) =
Kx^d(t) + w(t)$, where $K$ is a matrix so that $A + BK$ is marginally
stable (the data does not need to be generated from a stable system,
but this is done to avoid numerical issues), and $w(t)$ is a Gaussian
white noise process. We use Proposition~\ref{prop:network} to ensure
that the data is informative enough for data-driven control. In both
cases, condition~\ref{case:apriori} is satisfied.
Condition~\ref{case:aposteriori} fails for the Newman-Watts-Strogatz
network, but is satisfied by the star network
(cf. Remark~\ref{rem:feas}).  We integrate the primal-dual flow using
the stopping condition in Proposition~\ref{prop:cert} is used to
terminate the flow.  Fig.~\ref{fig:sim} shows the closed-loop state
trajectories and the number of iterations on each time step with
different values of $\rho$.  For the Newman-Watts-Strogatz network,
cf.~Fig.~\ref{fig:largeT}, the time horizon is $T = n$.  For the star
network, cf.~Fig.~\ref{fig:smallT}, the time horizon is $T = 5 < n$,
but the optimization at each time step is still feasible.  In both
cases, the distributed data-based predictive controller better
approximates the exact MPC for smaller values of~$\rho$ at the cost of
more iterations per time~step.

\section{Conclusions and Future Work}\label{sec:conclusion}
We have introduced a distributed data-based predictive controller for
stabilizing network linear dynamics described by unknown system
matrices. Instead of building a dynamic model, agents learn a
non-parametric representation based on a single trajectory and use it
to implement a controller as the solution of a network optimization
problem solved in a receding horizon manner and in a distributed way.
Future work will explicitly quantify the tolerance $\delta^*$ in terms
of the available data and study ways to construct a terminal cost
without knowledge of the underlying model to guarantee stability when
the stabilizing terminal constraint is omitted.  We plan to extend the
results to cases where there are constraints on the state and input,
characterize the robustness properties of the introduced control
scheme, investigate ways of improving its scalability, and consider
more general scenarios, including the presence of noise in the data,
inputs not persistently exciting of sufficiently high order, and
partial observations of the network state. We also plan to explore
improvements to the primal-dual flow to solve the optimization problem
with fewer iterations and less communication between the agents.


\begin{thebibliography}{10}

\bibitem{FB-JC-SM:08cor}
F.~Bullo, J.~Cort{\'e}s, and S.~Martinez, {\em Distributed Control of Robotic
  Networks}.
\newblock Applied Mathematics Series, Princeton University Press, 2009.

\bibitem{MM-ME:10}
M.~Mesbahi and M.~Egerstedt, {\em Graph Theoretic Methods in Multiagent
  Networks}.
\newblock Applied Mathematics Series, Princeton University Press, 2010.

\bibitem{RRN-JMM:14}
R.~R. Negenborn and J.~M. Maestre, ``Distributed model predictive control: An
  overview and roadmap of future research opportunities,'' {\em {IEEE} Control
  Systems}, vol.~34, no.~4, pp.~87--97, 2014.

\bibitem{MR-SL:06}
M.~Rotkowitz and S.~Lall, ``A characterization of convex problems in
  decentralized control,'' {\em IEEE Transactions on Automatic Control},
  vol.~51, no.~2, pp.~274--286, 2006.

\bibitem{FL-MF-MRJ:13}
F.~Lin, M.~Fardad, and M.~R. Jovanovic, ``Design of optimal sparse feedback
  gains via the alternating direction method of multipliers,'' {\em IEEE
  Transactions on Automatic Control}, vol.~58, no.~9, pp.~2426--2431, 2013.

\bibitem{GF-RM-AK-JL:17}
G.~{Fazelnia}, R.~{Madani}, A.~{Kalbat}, and J.~{Lavaei}, ``Convex relaxation
  for optimal distributed control problems,'' {\em IEEE Transactions on
  Automatic Control}, vol.~62, no.~1, pp.~206--221, 2017.

\bibitem{LF-YZ-AP-MK:19}
L.~{Furieri}, Y.~{Zheng}, A.~{Papachristodoulou}, and M.~{Kamgarpour}, ``On
  separable quadratic {L}yapunov functions for convex design of distributed
  controllers,'' in {\em {E}uropean {C}ontrol {C}onference}, (Naples, Italy),
  pp.~42--49, 2019.

\bibitem{JK-AJB-JP:13}
J.~Kober, J.~A. Bagnell, and J.~Peters, ``Reinforcement learning in robotics: A
  survey,'' {\em International Journal of Robotics Research}, vol.~32, no.~11,
  pp.~1238--1274, 2013.

\bibitem{LB-RB-BDS:08}
L.~Bu, R.~Babu, B.~D. Schutter, {\em et~al.}, ``A comprehensive survey of
  multiagent reinforcement learning,'' {\em IEEE Transactions on Systems, Man,
  and Cybernetics, Part C (Applications and Reviews)}, vol.~38, no.~2,
  pp.~156--172, 2008.

\bibitem{BR:19}
B.~Recht, ``A tour of reinforcement learning: The view from continuous
  control,'' {\em Annual Review of Control, Robotics, and Autonomous Systems},
  vol.~2, pp.~253--279, 2019.

\bibitem{DJ-BK:02}
D.~Jia and B.~Krogh, ``Min-max feedback model predictive control for
  distributed control with communication,'' in {\em {A}merican {C}ontrol
  {C}onference}, (Anchorage, AK), pp.~4507--4512, 2002.

\bibitem{WBD:07}
W.~B. Dunbar, ``Distributed receding horizon control of dynamically coupled
  nonlinear systems,'' {\em IEEE Transactions on Automatic Control}, vol.~52,
  no.~7, pp.~1249--1263, 2007.

\bibitem{BTS-ANV-JBR-SJ-GP:10}
B.~T. Stewart, A.~N. Venkat, J.~B. Rawlings, S.~J. Wright, and G.~Pannocchia,
  ``Cooperative distributed model predictive control,'' {\em Systems \& Control
  Letters}, vol.~59, no.~8, pp.~460--469, 2010.

\bibitem{LL:99}
L.~Ljung, {\em System Identification: Theory for the User}.
\newblock Prentice Hall information and system sciences series, Prentice Hall,
  1999.

\bibitem{JCW-PR-IM-BLMDM:05}
J.~C. Willems, P.~Rapisarda, I.~Markovsky, and B.~L. M.~D. Moor, ``A note on
  persistency of excitation,'' {\em Systems \& Control Letters}, vol.~54,
  no.~4, pp.~325--329, 2005.

\bibitem{USP-MI:09}
U.~Park and M.~Ikeda, ``Stability analysis and control design of lti
  discrete-time systems by the direct use of time series data,'' {\em
  Automatica}, vol.~45, no.~5, pp.~1265--1271, 2009.

\bibitem{TMM-PR:17}
T.~M. Maupong and P.~Rapisarda, ``Data-driven control: A behavioral approach,''
  {\em Systems \& Control Letters}, vol.~101, pp.~37--43, 2017.

\bibitem{CDP-PT:19}
C.~D. Persis and P.~Tesi, ``Formulas for data-driven control: Stabilization,
  optimality and robustness,'' {\em arXiv preprint arXiv:1903.06842}, 2019.

\bibitem{JC-JL-FD:19}
J.~{Coulson}, J.~{Lygeros}, and F.~{D\"orfler}, ``Data-enabled predictive
  control: in the shallows of the {DeePC},'' in {\em {E}uropean {C}ontrol
  {C}onference}, (Naples, Italy), pp.~307--312, 2019.

\bibitem{JB-JK-MAM-FA:19}
J.~Berberich, J.~K{\"o}hler, A.~M. M{\"u}ller, and F.~Allg{\"o}wer,
  ``Data-driven model predictive control with stability and robustness
  guarantees,'' {\em arXiv preprint arXiv:1906.04679}, 2019.

\bibitem{SB-NP-EC-BP-JE:11}
S.~Boyd, N.~Parikh, E.~Chu, B.~Peleato, and J.~Eckstein, ``Distributed
  optimization and statistical learning via the alternating direction method of
  multipliers,'' {\em Foundations and Trends in Machine Learning}, vol.~3,
  no.~1, pp.~1--122, 2011.

\bibitem{AC-BG-JC:17}
A.~Cherukuri, B.~Gharesifard, and J.~Cort{\'e}s, ``Saddle-point dynamics:
  conditions for asymptotic stability of saddle points,'' {\em SIAM Journal on
  Control and Optimization}, vol.~55, no.~1, pp.~486--511, 2017.

\bibitem{FB-AB-MM:17}
F.~Borrelli, A.~Bemporad, and M.~Morari, {\em Predictive Control for Linear and
  Hybrid Systems}.
\newblock Cambridge, UK: Cambridge University Press, 2017.

\bibitem{AJ-ASM:02}
A.~Jadbabaie and A.~S. Morse, ``On the {ISS} property for receding horizon
  control of constrained linear systems,'' {\em IFAC Proceedings Volumes},
  vol.~35, no.~1, pp.~37--40, 2002.

\bibitem{DQM-JBR-CVR-POMS:00}
D.~Q. Mayne, J.~B. Rawlings, C.~V. Rao, and P.~O.~M. Scokaert, ``Constrained
  model predictive control: Stability and optimality,'' {\em Automatica},
  vol.~36, pp.~789--814, 2000.

\bibitem{ZPJ-YW:01}
Z.-P. Jiang and Y.~Wang, ``Input-to-state stability for discrete-time nonlinear
  systems,'' {\em Automatica}, vol.~37, no.~6, pp.~857--869, 2001.

\bibitem{MEN-DJW:99}
M.~E.~J. Newman and D.~J. Watts, ``Renormalization group analysis of the
  small-world network model,'' {\em Physics Letters A}, vol.~263, no.~4-6,
  pp.~341--346, 1999.

\end{thebibliography}
\end{document}